\numberwithin{equation}{section} 
\def\myLambda{{{\Z[{1\over 2},i]}}}
\newtheorem{thm}{Theorem}[section]
\newtheorem{cor}[thm]{Corollary}
\newtheorem{lem}[thm]{Lemma}
\newtheorem{prop}[thm]{Proposition}
\theoremstyle{definition}	
\newtheorem{rem}[thm]{Remark}
\def\goth{\mathfrak}
\def\a{\alpha}
\def\cA{\mathcal{A}}
\def\cE{\mathcal{E}}
\def\cO{\mathcal{O}}
\def\C{\mathbb{C}}
\def\FF{\mathbb{F}}
\def\QQ{\mathbb{Q}}
\def\R{\mathbb{R}}
\def\Z{\mathbb{Z}}
\def\Aut{\mathrm{Aut}}
\def\Hom{\mathrm{Hom}}
\def\Ker{\mathrm{Ker}}
\def\lim{\mathrm{lim}}
\newcommand{\F}{\mathbb{F}}
\date{\today} 
\begin{document}

\title{On the mod-$2$ cohomology of $SL_3(\Z[\frac{1}{2},i])$} 

\begin{abstract} 
Let $\Gamma=SL_3(\Z[\frac{1}{2},i])$, let $X$ be any mod-$2$ acyclic $\Gamma$-CW complex 
on which $\Gamma$ acts with finite stabilizers and let $X_s$ be the 
$2$-singular locus of $X$. We calculate the mod-$2$ cohomology of the Borel construction of $X_s$ 
with respect to the action of $\Gamma$. This cohomology coincides with the mod-$2$ cohomology 
of $\Gamma$ in cohomological degrees bigger than $8$ 
and the result is compatible with a conjecture of 
Quillen which predicts the structure of the cohomology ring $H^*(\Gamma;\FF_2)$. 
\end{abstract}
 
\author{Hans-Werner Henn}
\address{Institut de Recherche Math\'ematique Avanc\'ee,
C.N.R.S. - Universit\'e de Strasbourg, F-67084 Strasbourg,
France}
  
\maketitle

\section{Introduction}  

A major motivation for studying the mod-$2$ cohomology of $SL_3(\Z[\frac{1}{2},i])$ 
comes from a conjecture of Quillen (Conjecture 14.7 of [Q1]) which 
concerns the structure of the mod-$p$ cohomology of $GL_n(\Lambda)$ where $\Lambda$ 
is a ring of $S$-integers in a number field such that $p$ is invertible in $\Lambda$ and 
$\Lambda$ contains a primitive $p$-th root of unity $\zeta_p$. 
The conjecture stipulates that under these 
assumptions $H^*(GL_n(\Lambda);\Z/p)$ is free over the polynomial algebra $\Z/p[c_1,\ldots,c_n]$
where the $c_i$ are the mod-$p$ Chern classes associated to an embedding of $\Lambda$ 
into the complex numbers.  In the sequel we will denote this conjecture by $C(n,\Lambda,p)$. 

We will show in Theorem \ref{QC} that for $\Lambda=\Z[\frac{1}{2},i]$ conjecture 
$C(n,\Lambda,2)$ is equivalent to the existence 
of an isomorphism 
$$
H^*(GL_n(\myLambda);\FF_2)\cong \FF_2[c_1,\ldots,c_n]\otimes 
E(e_1,e_1',\ldots, e_{2n-1},e_{2n-1}') 
$$  
where the classes $c_i$ are the Chern classes of the tautological $n$-dimensional complex 
representation of $GL_n(\myLambda)$, $E$ denotes an exterior algebra and the classes 
$e_{2i-1},e_{2i-1}'$ are of cohomological degree $2i-1$ for $i=1,\ldots,n$.

Conjecture $C(n,\myLambda,2)$ is trivially true for $n=1$ and has been verified for $n=2$ 
in \cite{Weiss}. On the other hand, Dwyer's method in \cite{D} using \'etale approximations 
$X_n$ for the homotopy type of the $2$-completion of $BGL_n(\Z[\frac{1}{2}])$ 
and comparing the set of homotopy classes of $[BP,X_n]$ with that of $[BP,BGL_n(\Z[\frac{1}{2}])]$  
for suitable cyclic groups of order $2^n$ can be adapted to disprove $C(16,\myLambda,2)$. 
We will not dwell on this in this paper. However, we note that \'etale approximations can also be used 
to show that if $C(n,\Z[\frac{1}{2},i],2)$ fails then $C(2n,\Z[{1\over 2}],2)$ fails as well 
\cite{HLtoappear}. We also note that $C(n,\Z[{1\over 2}],2)$ is known to be true for $n=2$ 
by \cite{Mitchell} and $n=3$ by \cite{sl3} 
but is known to be false for $n=32$ by \cite{D} and even for $n\geq 14$ \cite{HLtoappear}.

In this paper we give a partial calculation of $H^*(SL_3(\myLambda);\F_2)$ 
and make a first step in an attempt to study conjecture $C(3,\myLambda,2)$. We propose the same 
strategy as the one which was used in the case of $SL_3(\Z[\frac{1}{2}])$. 
In a first step one uses a centralizer spectral sequence introduced in \cite{borel} 
in order to calculate the mod-$2$ Borel cohomology $H^*_G(X_s;\FF_2)$ 
where $X$ is any mod-$2$ acyclic 
$G$-CW complex on which a suitable discrete group $G$ acts with finite stabilizers 
and $X_s$ is the $2$-singular locus of $X$, i.e.  
the subcomplex consisting of all points for which the isotropy group 
of the action of $G$ is of even order.   
For $G=SL_3(\Z[\frac{1}{2}])$ this step was carried out in \cite{borel} and for 
$G=SL_3(\Z[\frac{1}{2},i])$ it is carried out in this paper. The precise form of $X$ 
does not really matter in this step. 

The second step involves a very laborious analysis of the relative mod-$2$ Borel cohomology 
$H^*_G(X,X_s;\FF_2)$ and of the connecting homomorphism for the Borel cohomology 
of the pair $(X,X_s)$. In the case of $G=SL_3(\Z[\frac{1}{2}])$ this was carried out by hand in \cite{sl3}. 
A by hand calculation looks forbidding in the case of $G=SL_3(\Z[\frac{1}{2},i])$ and this paper 
makes no attempt on such a calculation. However, we do make some comments on what is likely to be 
involved in such an attempt.  

Here are the main results of this paper. In these results 
the elements $b_2$ respectively $b_3$ are of degree $4$ resp. $6$. They are given as  
Chern classes of the tautological $3$-dimensional complex representation of $SL_3(\Z[\frac{1}{2},i])$. 
The indices of the other elements give their cohomological degrees. These elements come 
from Quillen's exterior cohomology classes in the cohomology of $GL_3(\FF_p)$ for suitable primes $p$,  
for example for $p=5$ (cf. section \ref{cent-coh} for more details). 
Furthermore $\Sigma^n$ denotes $n$-fold 
suspension so that $\Sigma^4\FF_2$ is a one dimensional $\FF_2$-vector space 
concentrated in degree $4$. 

\begin{thm}\label{SES} Let $\Gamma=SL_3(\Z[\frac{1}{2},i])$ and let $X$ be any mod-$2$ acyclic  
$\Gamma$-CW complex such that the isotropy group of each cell is finite. Then the centralizer spectral sequence of \cite{borel}
$$
\lim^s_{\cA_*(\Gamma)}H^tC_{\Gamma}(E);\FF_2)\Longrightarrow H^{s+t}_{\Gamma}(X_s;\FF_2)
$$ 
collapses at $E_2$ and gives a short exact sequence 
$$
0\to \Sigma^4\FF_2\oplus\Sigma^4\FF_2\oplus \Sigma^7\FF_2\to H^*_{\Gamma}(X_s;\FF_2)\to 
\FF_2[b_2,b_3]\otimes E(d_3,d_3',d_5,d_5')\to 0 
$$
in which the second map is a map of graded algebras.  
\end{thm}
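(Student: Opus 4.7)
The plan is to apply the centralizer spectral sequence systematically, in the spirit of the calculation carried out for $SL_3(\Z[\frac{1}{2}])$ in \cite{borel}. I would first classify, up to $\Gamma$-conjugacy, the non-trivial elementary abelian $2$-subgroups $E$ of $\Gamma$. Since $2$ is invertible in $\myLambda$, any such $E$ acts diagonalizably on $\myLambda^3$ with eigenvalues in $\{\pm 1\}$, so the rank of $E$ is at most $2$. One expects only a small number of conjugacy classes: rank-$1$ classes represented by involutions of the form $\mathrm{diag}(1,-1,-1)$ (with possibly several $\Gamma$-classes, depending on the arithmetic of $\myLambda^{\times}$), and rank-$2$ classes represented by the full diagonal $(\Z/2)^2 \subset SL_3$. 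The relevant part of $\cA_*(\Gamma)$ is then equivalent to a small finite category whose nerve is essentially a bipartite graph.

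For each conjugacy class I would compute the centralizer $C_{\Gamma}(E)$ and its mod-$2$ cohomology. The centralizer of the rank-$2$ subgroup is the diagonal torus in $\Gamma$, whose cohomology is a tensor product of copies of $H^*(\myLambda^{\times};\FF_2)$; the centralizer of a rank-$1$ class fixes a splitting $\myLambda^3 = \myLambda \oplus \myLambda^2$ and is a determinant-$1$ subgroup of $GL_1(\myLambda) \times GL_2(\myLambda)$. Its mod-$2$ cohomology is then accessible through the classical Chern classes together with Quillen exterior classes that pull back from suitable finite general linear groups such as $GL_3(\FF_5)$. The presence of $i$ in $\myLambda$ enlarges the unit group and doubles the number of available exterior classes compared with the $\Z[\frac{1}{2}]$ case, which matches the primed generators $d_3'$, $d_5'$ in the statement.

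The heart of the argument is the calculation of the higher limits $\lim^s_{\cA_*(\Gamma)} \cH^t$ with $\cH^t(E) = H^t(C_{\Gamma}(E);\FF_2)$. Because the nerve of the non-trivial part of $\cA_*(\Gamma)$ is one-dimensional, only $\lim^0$ and $\lim^1$ can be non-zero, so the spectral sequence collapses at $E_2$ for purely dimensional reasons and the abutment fits into a short exact sequence
$$
0 \longrightarrow \lim^1_{\cA_*(\Gamma)} \cH^{*-1} \longrightarrow H^*_{\Gamma}(X_s;\FF_2) \longrightarrow \lim^0_{\cA_*(\Gamma)} \cH^{*} \longrightarrow 0.
$$
I would then identify $\lim^0$ with the algebra of compatible classes in the various centralizer cohomologies and show it is exactly $\FF_2[b_2, b_3]\otimes E(d_3, d_3', d_5, d_5')$: the classes $b_2, b_3$ are Chern classes of the tautological representation detected on the diagonal torus, and the four exterior classes come from Quillen's construction. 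Multiplicativity of the spectral sequence ensures that the second map in the resulting short exact sequence respects the algebra structure.

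The main technical obstacle will be the explicit computation of $\lim^1$: one has to track restriction, conjugation and Weyl-group actions around each rank-$2$ subgroup, determine which cocycles on the rank-$1$ centralizers fail to extend compatibly, and verify that precisely three such obstructions survive — two in cohomological degree $4$ and one in degree $7$. This is in principle a linear-algebra computation inside a short chain complex with explicit boundary maps, but the arithmetic of $\myLambda = \Z[\frac{1}{2},i]$, in particular the Galois action coming from adjoining $i$ and the structure of $\myLambda^{\times}/(\myLambda^{\times})^2$, enters in an essential way and is what distinguishes the calculation from the $\Z[\frac{1}{2}]$ case treated in \cite{borel}.
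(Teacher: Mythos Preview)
Your plan is essentially the paper's own strategy: classify the elementary abelian $2$-subgroups, compute the two centralizers and their cohomology, use the rank bound $r_2(\Gamma)=2$ to get collapse at $E_2$, and then identify $\lim^0$ and $\lim^1$. Two points are worth sharpening. First, there is exactly \emph{one} $\Gamma$-conjugacy class in each rank, not possibly several: since $2$ is invertible every elementary abelian $2$-subgroup is conjugate into the diagonal, so the skeleton of $\cA_*(\Gamma)$ has just two objects $E_1$, $E_2$ with $\Aut(E_2)\cong{\goth S}_3$; this simple structure is what makes everything computable. Second, the paper does \emph{not} compute $\lim^1$ by brute-force linear algebra in each degree: instead it uses the exact sequence
\[
0\to \lim F\to F(E_1)\buildrel{\varphi}\over\longrightarrow \Hom_{\FF_2[{\goth S}_3]}(St,F(E_2))\to \lim{}^1 F\to 0,
\]
computes the Poincar\'e series of the Steinberg-Hom term via the decomposition of $St\otimes St$ as $\FF_2[{\goth S}_3]$-module, exhibits the subalgebra $\FF_2[b_2,b_3]\otimes E(d_3,d_3',d_5,d_5')$ inside $\lim^0$ by hand, and then an Euler-characteristic comparison of Poincar\'e series forces $\lim^1$ to have dimension exactly $2t^3+t^6$. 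This Poincar\'e-series trick is the real shortcut you would want to use rather than the degree-by-degree cocycle chase you describe.
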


Next let 
\begin{equation*}\label{psi}
\psi: H^*(\Gamma;\FF_2)=H^*_{\Gamma}(X;\FF_2)\to H^*_{\Gamma}(X_s;\FF_2) 
\to \FF_2[b_2,b_3]\otimes E(d_3,d_3',d_5,d_5') 
\end{equation*}
be the composition of the map induced by the inclusion $X_s\subset X$ 
and the epimorphism of Theorem \ref{SES}.   

\begin{thm}\label{mainthm} Let $\Gamma=SL_3(\Z[\frac{1}{2},i])$ and $X$ be as in the previous theorem. 

a) If $SD_3(\Z[\frac{1}{2},i])$ denotes the subgroup of diagonal matrices 
of $\Gamma$ then the target of $\psi$ 
can be identified with a subalgebra of $H^*(SD_3(\Z[\frac{1}{2},i]);\FF_2)$ and $\psi$ is induced by 
the restriction homomorphism $H^*(B\Gamma;\FF_2)\to H^*(SD_3(\Z[\frac{1}{2},i]);\FF_2)$. 

b) There exists a map of graded $\FF_2$-algebras 
$$
\varphi:\FF_2[c_2,c_3]\otimes E(e_3,e_3',e_5,e_5')\to H^*(\Gamma;\FF_2)
$$
with $e_i$ and $e_i'$ of degree $2i-1$ such that the  composition of $\varphi$ with $\psi$ is the 
isomorphism which sends $c_i$ to $b_i$, $i=2,3$, $e_i$ to $d_i$ and $e_i'$ to $d_i'$, $i=3,5$. 

c) The homomorphism $\psi$  is surjective in all degrees, an isomorphism in degrees $*>8$ 
and its kernel is finite dimensional in degrees $*\leq 8$.

\end{thm}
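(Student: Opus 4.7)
Plan. The proof splits cleanly into the three parts, with (b) the constructive heart and (a), (c) providing the structural and spectral-sequence scaffolding.

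For part (a), I would interpret the target as (the $E_\infty^{0,*}$-line of) the centralizer spectral sequence of Theorem \ref{SES}, equivalently as the inverse limit $\lim_{\cA_*(\Gamma)} H^*(C_\Gamma(E); \FF_2)$. The key point is to single out a distinguished maximal elementary abelian 2-subgroup $E_0 \subset SD_3(\myLambda)$ of rank 2, generated by diagonal sign matrices, whose centralizer in $\Gamma$ is exactly $SD_3(\myLambda)$. The universal property of the limit then yields a canonical map from the target into $H^*(SD_3(\myLambda); \FF_2)$, and one verifies injectivity by enumerating all remaining maximal elementary abelian 2-subgroups and checking that their contribution is redundant. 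Naturality of the spectral sequence with respect to $\Gamma$-equivariant inclusions identifies $\psi$ with the ordinary restriction homomorphism to $H^*(SD_3(\myLambda); \FF_2)$.

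For part (b), I would build the image generators of $\varphi$ one at a time. Define $\varphi(c_i)$ to be the $i$-th mod-2 Chern class pulled back from $BGL_3(\C)$ along a fixed embedding $\myLambda \hookrightarrow \C$; under the restriction to $SD_3(\myLambda)$ these become the elementary symmetric polynomials in the mod-2 first Chern classes of the three coordinate characters, matching $b_2, b_3$. For the exterior generators, exploit the splitting $5 = (2+i)(2-i)$ in $\Z[i]$, which gives two distinct surjective ring homomorphisms $\myLambda \to \FF_5$ (specified by $i \mapsto 2$ respectively $i \mapsto -2$); each induces a group homomorphism $\Gamma \to GL_3(\FF_5)$, and pulling back Quillen's exterior classes in degrees 3 and 5 from $H^*(GL_3(\FF_5); \FF_2)$ along the two maps produces the candidates $\varphi(e_j), \varphi(e_j')$. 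Multiplicativity of $\varphi$ is automatic modulo the squaring relations on odd-degree generators, and these hold because Quillen's classes upstairs already square to zero. The verification that $\psi \circ \varphi$ is the claimed isomorphism then reduces via part (a) to computing restrictions to $SD_3(\myLambda)$ and comparing with the description of $d_j, d_j'$ obtained in the proof of Theorem \ref{SES}.

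For part (c), surjectivity of $\psi$ is immediate from (b). To control where $\psi$ is an isomorphism, factor it as $g \circ f$, where $f: H^*(\Gamma; \FF_2) \to H^*_\Gamma(X_s; \FF_2)$ is induced by inclusion and $g$ is the epimorphism of Theorem \ref{SES}; the kernel of $g$ is finite-dimensional, concentrated in degrees 4 and 7. To analyse $\ker f$, use the long exact sequence of the pair $(X, X_s)$ in Borel cohomology. Because $\Gamma$ acts on $X \setminus X_s$ with stabilizers of odd order, a transfer/Leray argument with $\FF_2$ coefficients identifies $H^*_\Gamma(X \setminus X_s; \FF_2)$ with the ordinary cohomology of the quotient $(X \setminus X_s)/\Gamma$; choosing $X$ of minimal dimension (for example, a Borel--Serre bordification) forces $H^*_\Gamma(X, X_s; \FF_2)$ to be finite-dimensional and supported in degrees at most 8. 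Feeding this back into the long exact sequence gives $\ker f = 0$ above degree 8 and finite-dimensional otherwise, yielding the claimed behaviour of $\ker \psi$.

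The main obstacle is the final computational step in part (b): the explicit identification of the restrictions of Quillen's classes from $H^*(GL_3(\FF_5); \FF_2)$ to $H^*(SD_3(\myLambda); \FF_2)$ with the abstract generators $d_3, d_3', d_5, d_5'$ defined through the centralizer spectral sequence. This requires an explicit description of Quillen's classes after restriction to a maximal torus of $GL_3(\FF_5)$, combined with careful bookkeeping of how the two ring homomorphisms $\myLambda \to \FF_5$ restrict to the diagonal. A secondary hurdle is pinning down the precise vanishing range in part (c), which a careful analysis of the cell dimensions of a Borel--Serre model of $X$ should resolve.
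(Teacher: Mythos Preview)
Your proposal is correct and follows essentially the same route as the paper. For (a) and (b) the arguments match almost verbatim: the paper identifies the target of $\psi$ with the limit (hence, via the injective restriction $\alpha_*$, with a subalgebra of $H^*(C_\Gamma(E_2);\FF_2)=H^*(SD_3(\myLambda);\FF_2)$), constructs $\varphi$ from the Chern classes together with pullbacks of Quillen's classes $q_3,q_5\in H^*(GL_3(\FF_5);\FF_2)$ along the two reductions $\pi,\pi':\myLambda\to\FF_5$, and checks $\psi\varphi$ on generators by restricting to diagonal matrices over $\FF_5$ and using naturality.

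The only place where the paper is more concrete than your plan is part (c). Rather than invoking a generic Borel--Serre model, the paper takes $X=X_\infty\times X_2$ with $X_\infty=SL_3(\C)/SU(3)$ and $X_2$ the Bruhat--Tits building for $SL_3(\QQ_2[i])$, and then uses that $\Gamma\backslash X_2$ is a $2$-simplex and that $X_\infty$ admits a $6$-dimensional $SL_3(\Z[i])$-equivariant deformation retract (the Ash/Soul\'e--Lannes well-rounded retraction). This pins down the vanishing range $H^n_\Gamma(X,X_s;\FF_2)=0$ for $n>8$ and the finiteness of $E_1^{s,t}$ directly, resolving exactly the ``secondary hurdle'' you flagged.
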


\smallskip
\begin{rem} In section \ref{RQC} we will discuss the relation of Theorem \ref{mainthm} with a 
conjecture of Quillen on the structure of the cohomology of $H^*(GL(n,\Lambda);\FF_2)$ for   
rings of $S$-integers $\Lambda$ in a number field satisfying suitable assumptions 
(cf. 14.7 of \cite{Q1}). This conjecture would hold in the case of $n=3$ and 
$\Lambda=\Z[\frac{1}{2},i]$ if  the maps $\psi$ and $\psi$ of part (b) of Theorem \ref{mainthm} turned 
out to be isomorphisms (cf. Proposition \ref{C3+main}). 
\end{rem}

\smallskip 
The following result is an immediate consequence of Theorem \ref{mainthm}. 

\begin{cor}\label{cor-main-thm} 
Let $\Gamma=SL_3(\Z[\frac{1}{2},i])$ and $X$ be as in Theorem \ref{SES}. 
Then the following conditions are equivalent. 

a) The restriction homomorphism $H^*(B\Gamma;\FF_2)\to H^*(SD_3(\Z[\frac{1}{2},i]);\FF_2)$ 
is injective and $H^*(B\Gamma;\FF_2)$ is isomorphic as a graded $\FF_2$-algebra to 
$\FF_2[b_2,b_3]\otimes E(d_3,d_3',d_5,d_5')$. 

b) There is an isomorphism 
$$
H^*_{\Gamma}(X,X_s;\FF_2)\cong \Sigma^5\FF_2\oplus\Sigma^5\FF_2\oplus \Sigma^8\FF_2 
$$ 
and the connecting homomorphism $H^*_{\Gamma}(X_s;\FF_2)\to H^{*+1}_{\Gamma}(X,X_s;\FF_2)$ 
is surjective. 
\qed 
\end{cor}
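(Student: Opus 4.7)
The plan is to deduce the equivalence directly from Theorem \ref{mainthm} together with the long exact sequence
$$
\cdots \to H^n_\Gamma(X,X_s;\FF_2) \to H^n(\Gamma;\FF_2) \xrightarrow{\rho} H^n_\Gamma(X_s;\FF_2) \xrightarrow{\delta} H^{n+1}_\Gamma(X,X_s;\FF_2) \to \cdots
$$
of the pair $(X,X_s)$ in $\Gamma$-equivariant mod-$2$ cohomology. By Theorem \ref{mainthm}(a) the map $\psi$ factors as the restriction $H^*(B\Gamma;\FF_2) \to H^*(SD_3(\Z[\frac{1}{2},i]);\FF_2)$ followed by the inclusion of a subalgebra, and by Theorem \ref{mainthm}(c) $\psi$ is surjective in every degree. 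Together these observations reduce the first (and hence, given Theorem \ref{mainthm}(c), the second) part of (a) to the single statement that $\psi$ is an isomorphism.

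For the implication (a) $\Rightarrow$ (b) I would write $\psi = q \circ \rho$, where $q:H^*_\Gamma(X_s;\FF_2)\to\FF_2[b_2,b_3]\otimes E(d_3,d_3',d_5,d_5')$ is the epimorphism of Theorem \ref{SES} with kernel $K = \Sigma^4\FF_2 \oplus \Sigma^4\FF_2 \oplus \Sigma^7\FF_2$. Assuming $\psi$ is an isomorphism, $\rho$ is injective, so the long exact sequence breaks into short exact sequences
$$
0 \to H^*(\Gamma;\FF_2) \xrightarrow{\rho} H^*_\Gamma(X_s;\FF_2) \xrightarrow{\delta} H^{*+1}_\Gamma(X,X_s;\FF_2) \to 0,
$$
which at once yields surjectivity of $\delta$ and identifies $H^{*+1}_\Gamma(X,X_s;\FF_2)$ with the cokernel of $\rho$. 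Since $\psi$ is an isomorphism, the image of $\rho$ is a graded $\FF_2$-complement of $K$ in $H^*_\Gamma(X_s;\FF_2)$, so this cokernel is $K$, and the one-degree shift gives the claimed description $\Sigma^5\FF_2 \oplus \Sigma^5\FF_2 \oplus \Sigma^8\FF_2$ of (b).

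The converse (b) $\Rightarrow$ (a) is equally formal. Surjectivity of $\delta$ forces $\rho$ to be injective and produces the same short exact sequences. A comparison of Poincaré series using Theorem \ref{SES} together with the hypothesis that $H^*_\Gamma(X,X_s;\FF_2) \cong \Sigma^5\FF_2\oplus\Sigma^5\FF_2\oplus\Sigma^8\FF_2$ shows that in every degree the contributions of $K$ and of $H^{*+1}_\Gamma(X,X_s;\FF_2)$ cancel exactly in degrees $4$ and $7$, so that $\dim H^n(\Gamma;\FF_2)$ matches the degree $n$ dimension of $\FF_2[b_2,b_3]\otimes E(d_3,d_3',d_5,d_5')$ for every $n$; combined with surjectivity of $\psi$ from Theorem \ref{mainthm}(c) this forces $\psi$ to be an isomorphism and thereby (a). I expect no real obstacle: all serious content has already been absorbed into Theorem \ref{mainthm}, and what remains is a direct chase of the long exact sequence of the pair against the short exact sequence of Theorem \ref{SES}, with the matching multiplicities $(2,2,1)$ in degrees $(4,4,7)$ and $(5,5,8)$ providing the only bookkeeping to check.
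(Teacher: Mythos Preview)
Your argument is correct and is exactly the direct deduction from Theorem~\ref{mainthm} and the long exact sequence of the pair that the paper intends (it offers no proof beyond the \qed). One minor slip of phrasing: it is the restriction map that factors as $\psi$ followed by the inclusion of the subalgebra into $H^*(SD_3(\Z[\frac{1}{2},i]);\FF_2)$, not the reverse, but this does not affect your conclusion that condition~(a) is equivalent to $\psi$ being an isomorphism.
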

\smallskip

The paper is organized as follows. In section \ref{CentSS} we recall the centralizer spectral sequence 
and in section \ref{CSSGamma} we prove Theorem 1.1 and Theorem 1.2. 
In Section \ref{comm} we make some comments 
on step 2 of the program of a complete calculation of $H^*(\Gamma;\FF_2)$. 
Finally in  section \ref{RQC} we discuss the relation with Quillen's conjecture. 
\medskip

The author gratefully acknowledges numerous enlightening conversations with Jean Lannes 
over many years  on the topics discussed in this paper.

\section{The centralizer spectral sequence} \label{CentSS} 

We recall the centralizer spectral sequence introduced in \cite{borel}. 

Let $G$ be a discrete group and let $p$ be a fixed prime. Let $\cA(G)$ be the category 
whose objects  are the elementary abelian $p$-subgroups $E$ of $G$, 
i.e. subgroups which are isomorphic to $(\Z/p)^k$ for some integer $k$; 
if $E_1$ and $E_2$ are elementary abelian $p$-subgroups of $G$, 
then the set of morphisms from $E_1$ to $E_2$ in $\cA(G)$ consists precisely of those group 
homomorphisms $\a:E_1\to E_2$ for which there exists an element $g\in G$ with $\a(e)=geg^{-1}$ 
for all $e\in E_1$. Let $\cA_*(G)$ be the full subcategory of $\cA(G)$ whose objects are the 
non-trivial elementary abelian $p$-subgroups.

For an elementary abelian $p$-subgroup we denote its centralizer in $G$ by $C_G(E)$. Then the 
assignment $E\mapsto H^*(C_G(E);\FF_p)$ determines a functor from $\cA_*(G)$ to the category 
$\cE$ of graded $\FF_p$-vector spaces. The inverse limit functor is a left exact functor 
from the functor category $\cE^{\cA_*(G)}$ to $\cE$. 
Its right derived functors are denoted by $\lim^s$. The $p$-rank $r_p(G)$ 
of a group $G$ is defined as the supremum of all $k$ such that $G$ contains a subgroup isomorphic 
to $(\Z/p)^k$. 

For a $G$-space $X$ and a fixed prime $p$ 
we denote by $X_s$ the $p$-singular locus, i.e. the subspace of $X$  
consisting of points whose isotropy group contains an element of order $p$.
Let $EG$ be the total space of the universal principal $G$-bundle. 
The mod-$p$ cohomology of the Borel construction $EG\times_GX$ of a $G$ space $X$ 
will be denoted $H^*_G(X;\FF_p)$. The following result is a special case of part (a) 
of Corollary 0.4 of \cite{borel}. 

\begin{thm}\label{CSS} 
Let $G$ be a discrete group and assume there exists a finite dimensional mod-$p$ 
acyclic $G$-CW complex $X$ such that the isotropy group of each cell is finite.
Then there exists a cohomological second quadrant spectral sequence  
$$
E_2^{s,t}=\lim_{\cA_*(G)}^sH^t(C_G(E);\FF_p)\Longrightarrow H^{s+t}_G(X_s;\FF_p) 
$$ 
with $E_2^{s,t}=0$ if $s\geq r_p(G)$ and $t\geq 0$. 
\end{thm}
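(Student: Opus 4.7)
The plan is to realise $EG \times_G X_s$, up to mod-$p$ cohomology equivalence, as a homotopy colimit over $\cA_*(G)$ with values $BC_G(E)$, and then feed this decomposition into the Bousfield--Kan spectral sequence for the cohomology of a homotopy colimit.

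For the preliminary setup, observe that $X_s = \bigcup_{1 \neq E \in \cA_*(G)} X^E$, since every element of order $p$ generates a non-trivial elementary abelian $p$-subgroup. Smith theory applied to the mod-$p$ acyclic $G$-CW complex $X$ (the finite-stabiliser hypothesis ensures it applies cellularly to each $E$-action) shows that every fixed set $X^E$ with $E \in \cA_*(G)$ is itself mod-$p$ acyclic. The Serre spectral sequence of the fibration $X^E \to EN_G(E) \times_{N_G(E)} X^E \to BN_G(E)$ then identifies the $\FF_p$-cohomology of each stratum with $H^*(N_G(E); \FF_p)$.

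The main construction is a mod-$p$ equivalence
\[
\hocolim_{E \in \cA_*(G)} BC_G(E) \longrightarrow EG \times_G X_s,
\]
with the hocolim taken over the category of non-trivial elementary abelian $p$-subgroups with conjugation-induced maps. I would produce it via a Thomason/Dwyer-style hypercover argument: the cover of $X_s$ by $\{X^E\}$ refines to a diagram indexed not by subgroups themselves but by their conjugation category $\cA_*(G)$, so that the extra copies of $BE$ produced by the naive Borel analysis collapse and the passage from $N_G(E)$ to $C_G(E)$ is encoded in the choice of indexing category. Getting this passage correct is the technical heart of the argument and is where I expect the main obstacle: it requires comparing $G$-orbits of fixed-point strata with orbits of conjugation on $\cA_*(G)$ and performing a Bousfield--Kan-type analysis of the resulting double complex, together with a cofinality argument to replace an orbit-category model by the category $\cA_*(G)$ of the statement.

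Once the decomposition is in hand, the cohomological Bousfield--Kan spectral sequence for a homotopy colimit immediately yields
\[
E_2^{s,t} = \lim_{\cA_*(G)}^s H^t(C_G(E);\FF_p) \Longrightarrow H^{s+t}_G(X_s;\FF_p).
\]
The vanishing $E_2^{s,t} = 0$ for $s \geq r_p(G)$ is then formal: a strictly increasing chain in $\cA_*(G)$ of length $s$ forces the top group to have rank at least $s+1$, so the nerve of $\cA_*(G)$ has dimension at most $r_p(G)-1$, which bounds the cohomological dimension of the inverse limit functor on $\cA_*(G)$ and gives the stated vanishing line.
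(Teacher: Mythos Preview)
The paper does not actually prove this theorem: it is stated as a special case of Corollary~0.4 of \cite{borel} and no argument is given beyond that citation. Your proposal therefore goes well beyond what the paper does, attempting to sketch the construction that the cited reference presumably carries out.

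Your outline is the standard one and is essentially correct in shape: one builds a mod-$p$ equivalence between $EG\times_G X_s$ and a homotopy colimit over $\cA_*(G)$ of the $BC_G(E)$, then reads off the spectral sequence as the Bousfield--Kan spectral sequence of that homotopy colimit, and obtains the vanishing line from the bound $r_p(G)-1$ on the length of chains in $\cA_*(G)$. You are also right to flag the passage from normalizers to centralizers (and from an orbit-type indexing to $\cA_*(G)$) as the genuine technical point; this is exactly where the work lies in \cite{borel} and in the related Jackowski--McClure/Dwyer decomposition arguments. One small correction: your paragraph invoking the Serre spectral sequence for $X^E\to EN_G(E)\times_{N_G(E)}X^E\to BN_G(E)$ to identify the stratum cohomology with $H^*(N_G(E);\FF_p)$ is not quite right as stated, since $N_G(E)$ need not act trivially on $H^*(X^E;\FF_p)$ in degree~$0$ when $X^E$ is disconnected; but for the actual construction one works with $C_G(E)$ acting on $X^E$ (as in Remark~\ref{edge} of the paper), where mod-$p$ acyclicity of $X^E$ gives $H^*_{C_G(E)}(X^E;\FF_p)\cong H^*(C_G(E);\FF_p)$ directly, and this is all that is needed.
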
 

\begin{rem}\label{edge} The edge homomorphism in this spectral sequence is a map of algebras 
$$
H^*_G(X_s;\FF_p)\to \lim_{\cA_*(G)}H^*(C_G(E);\FF_p)
$$ 
which is given as follows.  

Let $X^E$ be the fixed points for the action of $E$ on $X$. The $G$-action on $X$ restricts to an action 
of the centralizer $C_G(E)$ on $X^E$ and the $G$-equivariant maps 
$$
G\times_{C_G(E)}X^{E}\to X_s, \ \ \ (g,x)\mapsto gx \ . 
$$ for 
$E\in \cA_*(G)$ induce compatible maps in Borel cohomology 
$$
H^*_G(X_s;\FF_2)\to H^*_G(G\times_{C_G(E)}X^{E};\FF_2)\cong 
H^*_{C_G(E)}(X^{E};\FF_2)\cong H^*(C_G(E);\FF_2)
$$ 
which assemble to give the map to the inverse limit. 
Here we have used that by classical Smith theory $X^E$ 
is mod $p$-acyclic if $X$ is mod-$p$ acyclic and 
hence we get canonical isomorphisms 
$H^*_{C_G(E)}(X^{E};\FF_2)\cong H^*_{C_G(E)}(*;\FF_2)\cong 
H^*(C_G(E);\FF_2)$. 

Furthermore the composition 
\begin{equation}\label{comp}
H^*(G;\FF_p)=H^*_G(X;\FF_p)\to H^*_G(X_s;\FF_2)\to H^*(C_G(E);\FF_2)
\end{equation}
is induced by the inclusions $C_G(E)\to G$ as $E$ varies through $\cA_*(G)$. 
\end{rem}

In \cite{borel} we have used this spectral sequence in the case $p=2$ and $G=SL_3(\Z)$. 
Here we will use it in the case $p=2$ and $G=SL(3,\Z[{1\over 2},i])$. In both cases we have $r_2(G)=2$ 
and hence the spectral sequence collapses at $E_2$ and degenerates into a short exact sequence 
\begin{equation}\label{SES1}
0\to \lim_{\cA_*(G)}^1H^t(C_G(E);\FF_2)\to H^{t+1}_G(X_s;\FF_2)\to 
\lim_{\cA_*(G)}H^{t+1}(C_G(E);\FF_2)\to 0 \ . 
\end{equation}

\section{The centralizer spectral sequence for $SL_3(\Z[\frac{1}{2},i])$}\label{CSSGamma}

\medskip

\subsection{The Quillen category} Let $K$ be any number field, let $\cO_K$ be its ring of integers  and 
consider the ring of $S$-integers 
$\cO_K[\frac{1}{2}]$. Then, up to equivalence,  the Quillen category of 
$G:=SL_3(\cO_K[{1\over 2}])$ for the 
prime $2$ is independant of $K$. In fact, because $2$ is invertible every elementary abelian 
$2$-subgroup is conjugate to a diagonal subgroup, and hence $\cA_*(G)$ has a 
skeleton, say $\cA$, with exactly two objects, say $E_1$ and $E_2$ of rank 1 and 2, respectively. 
We take $E_1$ to be the subgroup generated by the diagonal matrix 
whose first two diagonal entries are $-1$
and whose third diagonal entry is $1$, and $E_2$ to be the subgroup of all diagonal matrices with 
diagonal entries $1$ or  $-1$ and determinant 1. 

The automorphism group of $E_1$ is trivial, of course, 
while $\Aut_{\cA}(E_2)$ is isomorphic to the group of all abstract automorphisms of 
$E_2$ which we can identify with ${\goth S}_3$, the symmetric group on three elements. There are three
morphisms from $E_1$ to $E_2$ and $\Aut_{\cA}(E_2)$ acts transitively on them.  

\subsection{The centralizers and their cohomology}\label{cent-coh} 
For the centralizers in $H:=GL_3(\cO_K[\frac{1}{2}])$ 
we find $C_H(E_1)\cong GL_2(\cO_K[\frac{1}{2}])\times GL_1(\cO_K[\frac{1}{2}])$ 
resp. $C_H(E_2)\cong D_3(\cO_K[\frac{1}{2}])$ if $D_n(\cO_K[\frac{1}{2}])$
denotes the subgroup of diagonal matrices in $GL_n(\cO_K[\frac{1}{2}])$. 
This implies 
$$
C_G(E_1)\cong GL_2(\cO_K[\frac{1}{2}]),\ \ \  
C_G(E_2)\cong D_2(\cO_K[\frac{1}{2}])
\cong \cO_K[\frac{1}{2}]^{\times}\times \cO_K[\frac{1}{2}]^{\times}\ . 
$$  

From now on we specialize to the case $K=\QQ_2[i]$ where we have  
$\cO_K[\frac{1}{2}]=\Z[\frac{1}{2},i]$. 
In this case the cohomology of the centralizers is explicitly known. In the sequel we abbreviate 
 $SL_3(\Z[\frac{1}{2},i])$ by $\Gamma$. 
 
\subsubsection{The cohomology of $C_{\Gamma}(E_2)$}\label{cent-coh2}
There is an isomorphism of groups 
\begin{equation*}\label{units}
\Z/4\times \Z\cong \Z[\frac{1}{2},i]^{\times}, \ \   (n,m)\mapsto i^n(1+i)^m
\end{equation*}
and therefore we get an isomorphism 
\begin{equation}\label{E2-centralizer} 
H^*(C_{\Gamma}(E_2);\FF_2)\cong H^*(\Z[\frac{1}{2},i]^{\times}\times \Z[\frac{1}{2},i]^{\times};\FF_2)
\cong  \FF_2[y_1,y_2]\otimes E(x_1,x_1',x_2,x_2')   
\end{equation}
with $y_1$ and $y_2$ in degree $2$ and the other generators in degree $1$. 
We agree to choose the generators so that $y_1$, $x_1$ and $x_1'$ come from the first factor 
with $x_1$ and $x_1'$ being the dual basis to the basis of 
$$
H_1(\Z[\frac{1}{2},i]^{\times};\FF_2)\cong \Z[\frac{1}{2},i])^{\times}/\big(\Z[\frac{1}{2},i])^{\times}\big)^2
\cong \Z/2\times\Z/2
$$  
given by the image of $i$ and $(1+i)$ in the 
mod-$2$ reduction of the abelian group $GL_1(\Z[\frac{1}{2},i])$ and $y_1$ coming from 
$H^2(\Z/4;\FF_2)$; likewise with $y_2$, $x_2$ and $x_2'$ coming from the second factor. 

\subsubsection{The cohomology of $C_{\Gamma}(E_1)$}\label{cent-coh1}
This cohomology has been calculated in \cite{Weiss}. In fact, from Theorem 1 of \cite{Weiss} we know  
\begin{equation}\label{n=2}
H^*(C_{\Gamma}(E_1);\FF_2)\cong H^*(GL_2(\Z[\frac{1}{2},i]);\F_2)\cong  
\FF_2[c_1,c_2]\otimes E(e_1,e_1',e_3,e_3') \ . 
\end{equation} 

In the sequel we give a short summary of this calculation. 
The classes $e_1$, $e_1'$, $e_3$ and $e_3'$ are pulled  back from Quillen's exterior classes 
$q_1$ and $q_3$ \cite{Q2} in 
\begin{equation}\label{n=2F5}
H^*(GL_2(\FF_5);\FF_2)\cong \FF_2[c_1,c_2]\otimes E(q_1,q_3)
\end{equation}
via two ring homomorphisms  
\begin{equation}\label{reduc}
\pi:\Z[\frac{1}{2},i]\to \FF_5 \ , \ \ \pi':\Z[\frac{1}{2},i]\to \FF_5 \ . 
\end{equation}
We choose $\pi$ such that $i$ is sent to $3$ and $\pi'$ such that $i$ is sent to $2$.  

Then consider the two commutative diagrams (with horizontal arrows induced by inclusion and 
vertical arrows induced by $\pi$ resp. $\pi'$) 
\begin{equation}\label{commdiag}
\begin{matrix} 
D_2(\Z[\frac{1}{2}]) & \to & GL_2(\Z[\frac{1}{2}]) \\
\\
\downarrow     &   & \downarrow \\
\\
D_2(\FF_5) & \to & GL_2(\FF_5) \ .  \\
\end{matrix}
\end{equation}

By abuse of notation we can write 
\begin{equation}\label{D2F5}
H^*(D_2(\FF_5);\FF_2)\cong H^*(\FF_5^{\times}\times \FF_5^{\times};\FF_2)\cong 
\FF_2[y_1,y_2]\otimes E(x_1,x_2)
\end{equation}
with $y_1\in H^2(\FF_5^{\times};\FF_2)$ and $x_1\in H^2(\FF_5^{\times};\FF_2)$ coming from the first 
factor and likewise with $y_2$ and $x_2$ coming from the second factor. 
Then these ring homomorphisms induce two homomorphisms 
$$
\pi^*,\pi'^*:
H^*(D_2(\FF_5);\FF_2))\to H^*(D_2(\Z[\frac{1}{2}]);\FF_2)
$$ 
which in term of the isomorphisms (\ref{D2F5}) and (\ref{E2-centralizer}) are 
explicitly given by 
\begin{equation}\label{pipi'}
\pi^*(y_i)=y_i=\pi'^*(y_i),\ \ \pi^*(x_i)=x_i,\ \ \pi'^*(x_i)=x_i+x_i' \ \ \text{for}\ \  i=1,2 \ . 
\end{equation}

The cohomology of $GL_2(\FF_5)$ is detected by restriction to the cohomology of diagonal matrices  and 
restriction is given explicitly as follows: 
\begin{equation}\label{rest} 
\begin{array}{ll}
c_1\mapsto y_1+y_2,\ \ c_2\mapsto y_1y_2,\ \ q_1\mapsto x_1+x_2    & q_3\mapsto y_1x_2+y_2x_1 \ . \\
\end{array}
\end{equation}

Then $e_1,e_1',e_3,e_3'$ are defined via   
\begin{equation}\label{def-e_i's} 
e_1=\pi^*(q_1),\ \ e_3=\pi^*(q_3),\ \ e_1'=\pi'^*(q_1),\ \ e_3'=\pi'^*(q_3) \ .
\end{equation} 
If $c_1$ and $c_2$ are the Chern classes of the tautological $2$-dimensional complex 
representation of $GL_2(\Z[\frac{1}{2}],i)$, 
then the restriction homomorphism from $H^*(GL_2(\Z[\frac{1}{2},i]);\FF_2)$ 
to the cohomology of the subgroup of diagonal matrices is injective and by using (\ref{commdiag}) 
and (\ref{rest}) we see that it is explicitly given by 
\begin{equation}\label{restriction}
\begin{array}{lll} 
c_1 \mapsto \ y_1+y_2 &  c_2\mapsto y_1y_2 \\
e_1\mapsto \ x_1+x_2 &   e_3\mapsto y_1x_2+y_2x_1 \\  
e_1'\mapsto \ x_1+x_1'+x_2+x_2'  &    e_3'\mapsto  y_1(x_2+x_2')+y_2(x_1+x_1')  \ . \\ 
\end{array}
\end{equation}

\subsubsection{Functoriality} We note that together with the isomorphisms (\ref{E2-centralizer}) and 
(\ref{n=2}) the restriction (\ref{restriction}) also describes the map  
$$
\a_*: 
H^*(C_{\Gamma}(E_1);\FF_2)
\to 
H^*(C_{\Gamma}(E_2);\FF_2)
$$ 
induced from the standard inclusion of $E_1$ into $E_2$. 

To finish the description of $H^*(C_{\Gamma}(-);\FF_2)$ as a functor on $\cA$ it remains to describe the 
action of the symetric group $Aut_{\cA}(E_2)\cong {\goth S}_3$ of rank $3$ on 
$H^*(C_{\Gamma}(E_2);\FF_2)\cong  \FF_2[y_1,y_2]\otimes\Lambda(x_1,x_1',x_2,x_2')$     
and because of the multiplicative structure we need it only on the generators. 

If $\tau\in \Aut_{\cA}(E_2)$ corresponds to permuting the factors in 
$C_{\Gamma}(E_2)\cong GL_1(\Z[\frac{1}{2},i])\times GL_1(\Z[\frac{1}{2},i])$ 
then 
\begin{equation}\label{tau-action}
\begin{array}{lll} 
\tau_*(y_1)= y_2 &  \tau_*(x_1)= x_2 &  \tau_*(x_1')= x_2' \\ 
\tau_*(y_2)=y_1  & \tau_*(x_2)=x_1 &  \tau_*(x_2')=x_1'  \\  
\end{array}
\end{equation}
and if $\sigma\in \Aut_{\cA}(E_2)$ corresponds to the cyclic permutation of the diagonal entries 
(in suitable order) then 
\begin{equation}\label{sigma-action}
\begin{array}{llll} 
\sigma_*(y_1)= y_2 &  \sigma_*(x_1)= x_2 & \sigma_*(x_1')= x_2' \\
\sigma_*(y_2)=y_1+y_2 & \sigma_*(x_2)=x_1+x_2 &  \sigma_*(x_2')=x_1'+x_2'  \ . \\
\end{array}
\end{equation}

\subsection{Calculating the limit and its derived functors} 

In Proposition 4.3 of \cite{borel} we showed that for any functor $F$ from $\cA$ 
to $\Z_{(2)}$-modules there is an exact sequence  
\begin{equation}\label{lim-exactsequence}
0\to {\lim}_{\cA}F\to F(E_1)\buildrel{\varphi}\over\longrightarrow  
\Hom_{\Z[{\goth S}_3]}(St_{\Z},F(E_2))\to {\lim}^1_{\cA}F\to 0 
\end{equation}
where $St_{\Z}$ is the $\Z[{\goth S}_3]$ module given by the kernel of the augmentation 
$\Z[{\goth S}_3/{\goth S}_2]\to \Z$, and if $a$ and $b$ are chosen to give an integral basis of $St_{\Z}$ 
on which $\tau$ and $\sigma$ act via 
\begin{equation}\label{St-action}
\begin{array}{ll}
\tau_*(a)=b & \tau_*(b)=a \\
\sigma_*(a)=-b & \sigma_*(b)=a-b \\
\end{array}
\end{equation} 
then $\varphi(x)(a)=\a_*(x)-(\sigma_*)^2\a_*(x)$ and 
$\varphi(x)(b)=\a_*(x)-\sigma_*\a_*(x)$ if $x\in F(E_1)$.  

Because in our case the functor takes values in $\FF_2$-vector spaces we can replace 
$\Hom_{\Z[{\goth S}_3]}$ by $\Hom_{\FF_2[{\goth S}_3]}$ and $St_{\Z}$ by its mod-$2$ reduction.  
The following elementary lemma is needed in the analysis of the third term in the exact 
sequence (\ref{lim-exactsequence}). 
\smallskip

\begin{lem}\label{Steinberg-lemma} {\ } 

a) Let $St$ be the $\FF_2[{\goth S_3}]$-module given as
the kernel of the augmentation $\FF_2[{\goth S}_3/{\goth S}_2] \to \FF_2$. 
The tensor product $St\otimes St$ decomposes as $\FF_2[{\goth S}_3]$-module canonically as 
$$
St\otimes St\cong \FF_2[{\goth S}_3/A_3]\oplus St 
$$ 
where $A_3$ denotes the alternating group on three letters. 
In fact, the decomposition is given by  
$$
St\otimes St\cong Im(id+\sigma_*+\sigma_*^2)\oplus Ker(id+\sigma_*+\sigma_*^2)
$$ 
and the first summand is isomorphic to $\FF_2[{\goth S}_3/A_3]$ while the second summand 
is isomorphic to $St$.

b) The tensor product $\FF_2[{\goth S}_3/A_3]\otimes St$ is isomorphic to $St\oplus St$. 
\end{lem}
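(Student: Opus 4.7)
The plan is to prove both parts by exploiting the fact that in characteristic two the norm element $N = 1 + \sigma_* + \sigma_*^2$ is a central idempotent in $\FF_2[{\goth S}_3]$ which projects any ${\goth S}_3$-module onto its trivial $A_3$-isotypic component.

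For part (a), I would first verify that $\sigma_* N = \sigma_* + \sigma_*^2 + \sigma_*^3 = N$ (using $\sigma_*^3 = 1$), so that $N^2 = N + N\sigma_* + N\sigma_*^2 = 3N = N$ in characteristic two, and that conjugation by $\tau$ fixes $N$ because $\tau \sigma \tau^{-1} = \sigma^{-1}$. This establishes that $N$ is a central idempotent, hence $St \otimes St = \mathrm{Im}(N) \oplus \mathrm{Ker}(N)$ as $\FF_2[{\goth S}_3]$-modules. Identifying the summands is then a short calculation. On $\mathrm{Im}(N)$ the relation $\sigma_* N = N$ forces $\sigma_*$ to act as the identity, so this summand is a module for ${\goth S}_3/A_3 = \langle \tau \rangle$; computing $N(a\otimes a)$ and $N(a\otimes b)$ using (\ref{St-action}) exhibits it as two-dimensional with $\tau_*$ acting by a non-trivial unipotent, hence isomorphic to $\FF_2[{\goth S}_3/A_3]$. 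For $\mathrm{Ker}(N)$, the complementary idempotent $1-N$ projects onto the faithful $A_3$-isotypic part, so $\sigma_*$ has no non-zero fixed vector on $\mathrm{Ker}(N)$; in particular $\mathrm{Ker}(N)$ contains no trivial ${\goth S}_3$-submodule and, being two-dimensional over $\FF_2$, must be isomorphic to $St$, which is the unique irreducible two-dimensional $\FF_2[{\goth S}_3]$-module.

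For part (b), let $\{e, e'\}$ be the standard basis of $\FF_2[{\goth S}_3/A_3]$ with $\tau$ swapping them. A copy of $St$ is immediately visible inside $M = \FF_2[{\goth S}_3/A_3] \otimes St$ as the submodule $V_1$ spanned by the $\tau$-symmetrized tensors $u = e \otimes a + e' \otimes a$ and $v = e \otimes b + e' \otimes b$, the ${\goth S}_3$-action on which matches that of $St$ via $u \leftrightarrow a$, $v \leftrightarrow b$. The quotient $M/V_1$ is again a copy of $St$ (with representative basis $\overline{e\otimes a}, \overline{e\otimes b}$), so the desired decomposition reduces to constructing an equivariant splitting of $M \to M/V_1$. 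I would write a candidate section as $a \mapsto e\otimes a + w_1$, $b \mapsto e\otimes b + w_2$ with $w_i \in V_1$; the $\sigma_*$- and $\tau_*$-equivariance requirements then reduce to the linear equations $w_2 = \sigma_* w_1$ and $(\sigma_* + \tau_*) w_1 = v$ in $V_1$, which the choice $w_1 = v$, $w_2 = u + v$ is readily seen to solve.

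The only non-routine step is the existence of the section in part (b), equivalently the vanishing of the obstruction class in $\mathrm{Ext}^1_{\FF_2[{\goth S}_3]}(St, St)$. Conceptually this reflects the algebra decomposition $\FF_2[{\goth S}_3] \cong \FF_2[{\goth S}_3/A_3] \oplus M_2(\FF_2)$ arising from $\FF_2[A_3] \cong \FF_2 \times \FF_4$ together with the crossed product given by the Galois action of $\tau$ on $\FF_4$, which shows that the $St$-block is semisimple. The explicit linear-algebraic verification above avoids invoking this structure directly and provides a hands-on proof.
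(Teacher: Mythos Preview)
Your argument is correct, but the paper proceeds differently. The paper's key observation is that $St$ and $\FF_2[{\goth S}_3/A_3]$ are the two indecomposable projective $\FF_2[{\goth S}_3]$-modules; hence $St\otimes St$ and $\FF_2[{\goth S}_3/A_3]\otimes St$ are automatically projective, and one only needs to count how many copies of each summand occur. The central idempotent $e=1+\sigma_*+\sigma_*^2$ distinguishes the two (it is the identity on $\FF_2[{\goth S}_3/A_3]$ and zero on $St$), so for (a) it suffices to check that both $\mathrm{Im}(e)$ and $\Ker(e)$ are non-zero on $St\otimes St$, and for (b) one simply notes that $\sigma$ acts trivially on $\FF_2[{\goth S}_3/A_3]$, hence $e$ acts as $\mathrm{id}\otimes(1+\sigma_*+\sigma_*^2)=0$ on the tensor product, forcing it to be $St\oplus St$. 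Your approach instead computes everything explicitly: you identify $\mathrm{Im}(e)$ and $\Ker(e)$ by hand in (a), and in (b) you build a short exact sequence $0\to St\to M\to St\to 0$ and write down an explicit equivariant section. This is longer but entirely self-contained and avoids invoking the classification of projectives; the paper's route is shorter but presupposes that structural input. Your closing remark about the block decomposition $\FF_2[{\goth S}_3]\cong \FF_2[{\goth S}_3/A_3]\oplus M_2(\FF_2)$ is in fact precisely the projectivity statement the paper uses, so the two arguments converge at that point.
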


\begin{proof} a) It is well known that $St$ is a projective $\FF_2[{\goth S}_3]$-module, hence 
$St\otimes St$ is also projective. It is also well known that every projective indecomposable 
$\FF_2[{\goth S}_3]$-module is isomorphic to either $St$ or $\FF_2[{\goth S}_3/A_3]$. 
Both modules can be distinguished by the fact that 
$e:=id+\sigma_*+\sigma_*$  acts trivially on $St$ and as the identity on $\FF_2[{\goth S}_3/A_3]$. 

Furthemore $e$ is a central idempotent in $\FF_2[{\goth S}_3]$ and hence each 
$\FF_2[{\goth S}_3]$-module $M$ 
decomposes as  direct sum of $\FF_2[{\goth S}_3]$-modules 
$$
M\cong \text{Im}(e:M\to M)\oplus \Ker(e:M\to M) \ . 
$$ 
An easy calculation shows that in the case of $St\otimes St$ both submodules are non-trivial and this 
together with the fact these submodules must be projective proves the claim. 

b) Again each of the factors in the tensor product is a projective $\FF_2[{\goth S}_3]$-module, 
hence the tensor product is a projective $\FF_2[{\goth S}_3]$-module. 
Because $\sigma$ acts as the identity on $\FF_2[{\goth S}_3/A_3]$ we see that the idempotent $e$ acts 
trivially on the tensor product and this forces the tensor product to be isomorphic to $St\oplus St$.  \end{proof} 

\smallskip
\begin{lem} The Poincar\'e series $\chi_2$ of  
$\Hom_{\FF_2[{\goth S}_3}](St,\FF_2[y_1,y_2]\otimes E(x_1,x_1',x_2,x_2'))$ is given by 
$$
\chi_2={2t^2(1+3t^2+3t^4+t^6)+2t(1+2t^2+2t^4+2t^6+t^8)\over (1-t^4)(1-t^6)} \ . 
$$
\end{lem}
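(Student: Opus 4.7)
The plan is to count $\dim_{\FF_2}\Hom_{\FF_2[{\goth S}_3]}(St, M_n)$ degree by degree using the fact that $St$ is a simple projective $\FF_2[{\goth S}_3]$-module (as recorded in the proof of Lemma~\ref{Steinberg-lemma}). The simple $\FF_2[{\goth S}_3]$-modules are just $\FF_2$ and $St$, with Brauer character values at the $3$-cycle $\sigma$ equal to $1$ and $\omega + \omega^2 = -1$ respectively (where $\omega \in \C$ is a primitive cube root of unity). Since $St$ is projective, for any finite-dimensional $\FF_2[{\goth S}_3]$-module $N$ with composition multiplicities $(a,b)$ of $(\FF_2, St)$ one has $\dim N = a + 2b$ and $\chi^{Br}_N(\sigma) = a - b$, while $\dim \Hom_{\FF_2[{\goth S}_3]}(St, N) = b$. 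Eliminating yields the simple formula
\[
\dim \Hom_{\FF_2[{\goth S}_3]}(St, N) = \frac{\dim N - \chi^{Br}_N(\sigma)}{3}.
\]

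To apply this degreewise to $M = R \otimes E$ with $R = \FF_2[y_1, y_2]$ and $E = E(x_1, x_1', x_2, x_2')$, I first observe from (\ref{tau-action})-(\ref{sigma-action}) that each of the subspaces spanned by $\{y_1, y_2\}$, $\{x_1, x_2\}$ and $\{x_1', x_2'\}$ carries the structure of $St$. Hence $R = \mathrm{Sym}^*(St)$ with $St$ placed in degree $2$, and $E = \Lambda^*(St \oplus St)$ with each $St$ placed in degree $1$. The Poincar\'e series is then $P_M(t) = (1+t)^4/(1-t^2)^2 = (1+t)^2/(1-t)^2$. The Brauer-character generating series factors multiplicatively through the $\sigma$-eigenvalues $\omega, \omega^2$ lifted to characteristic zero:
\[
B_M(t) = \frac{\bigl((1+\omega t)(1+\omega^2 t)\bigr)^2}{(1-\omega t^2)(1-\omega^2 t^2)} = \frac{(1-t+t^2)^2}{1+t^2+t^4} = \frac{1-t+t^2}{1+t+t^2},
\]
where the last simplification uses $1+t^2+t^4 = (1-t+t^2)(1+t+t^2)$.

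Substituting into the displayed formula gives $\chi_2(t) = \frac{1}{3}\left[\frac{(1+t)^2}{(1-t)^2} - \frac{1-t+t^2}{1+t+t^2}\right]$. Brought over the common denominator $(1-t)^2(1+t+t^2) = (1-t)(1-t^3)$, a short calculation shows the numerator simplifies to $6t(1+t^2)$, and hence $\chi_2(t) = \frac{2t(1+t^2)}{(1-t)(1-t^3)}$. To put this into the form stated in the lemma it remains only to multiply numerator and denominator by $(1+t)^2(1+t^2)(1-t+t^2) = \frac{(1-t^4)(1-t^6)}{(1-t)(1-t^3)}$ and expand the resulting numerator. The principal obstacle is the modular character-theoretic step: one must correctly lift the $\sigma$-eigenvalues on $St$ to complex cube roots of unity (legitimate because $|\sigma|=3$ is coprime to $2$), and use the projectivity of $St$ to convert composition-factor multiplicities into $\Hom$-dimensions; once these points are in place, the remainder is routine rational-function manipulation.
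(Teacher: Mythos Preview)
Your argument is correct and takes a genuinely different route from the paper. The paper proceeds by decomposing the exterior factor $E(x_1,x_1',x_2,x_2')$ degree by degree into copies of $\FF_2$, $St$, and $St\otimes St$ (invoking Lemma~\ref{Steinberg-lemma}), then reduces everything to three auxiliary Poincar\'e series $\chi_{2,0},\chi_{2,1},\chi_{2,2}$ for $\Hom_{\FF_2[{\goth S}_3]}(St,-)$ applied to $\FF_2[y_1,y_2]$, $St\otimes St\otimes\FF_2[y_1,y_2]$, and $St\otimes\FF_2[y_1,y_2]$; these in turn are computed via the splitting $\FF_2[{\goth S}_3]\cong St\oplus St\oplus\FF_2[{\goth S}_3/A_3]$ and the explicit description of $\FF_2[y_1,y_2]^{A_3}$ as a free $\FF_2[b_2,b_3]$-module. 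By contrast, you bypass all of this structure by using that $St$ is its own projective cover with $\End_{\FF_2[{\goth S}_3]}(St)=\FF_2$, so that $\dim\Hom(St,N)=[N:St]$, and then reading off composition multiplicities from the Brauer character at $\sigma$ via the formula $(\dim N-\chi^{Br}_N(\sigma))/3$. What your approach buys is brevity and a single closed-form computation; what the paper's approach buys is that it stays entirely within elementary $\FF_2[{\goth S}_3]$-module theory and produces, as a by-product, the explicit module decompositions that are reused later (e.g.\ in the proof of Proposition~\ref{lim-lim1}). Both are valid; yours is the slicker calculation, while the paper's keeps closer to the concrete objects appearing elsewhere in the argument.
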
 
\smallskip

\begin{proof} The isomorphism of  (\ref{E2-centralizer}) is an isomorphism of 
$\FF_2[{\goth S}_3]$-modules where the action of ${\goth S}_3$ is given by (\ref{tau-action}) and 
(\ref{sigma-action}). In particular we see that 
$H^1(GL_1(\Z[\frac{1}{2},i])\times GL_1(\Z[\frac{1}{2},i]);\FF_2)$ is isomorphic to $St\oplus St$ 
generated by $x_1,x_1',x_2,x_2'$. The exterior powers of $H^1$ are given as 
$$ 
E^k(x_1,x_2,x_1',x_2')\cong E^k(St\oplus St)\cong 
\bigoplus_{j=0}^k E^jSt\otimes E^{k-j}St 
$$
and, because $E^k(St)$ is isomorphic to $\Sigma^k\FF_2$ if $k=0,2$, isomorphic to 
$\Sigma St$ if $k=1$, and trivially otherwise, we obtain  
$$
E^k(x_1,x_2,x_1',x_2')\cong 
\begin{cases} 
\Sigma^k\FF_2 & k=0,4 \\
\Sigma^k(St\oplus St) & k=1,3 \\
\Sigma^2\FF_2\oplus \Sigma^2(St\otimes St)\oplus 
\Sigma^2\FF_2   &k=2   \\ 
0      & k\neq 0,1,2,3,4  \\
\end{cases}  \\ 
$$ 
where $\FF_2$ denotes the trivial $\FF_2[{\goth S}_3]$-module whose 
additive structure is that of $\FF_2$.  

Therefore the Poincar\'e series $\chi_2$ of $\Hom_{\FF_2[{\goth S}_3]}(St,H^*(C_G(E_2);\FF_2))$ 
decomposes according to the decomposition of $\Lambda(x_1,x_2',x_1',x_2')$ as sum 
\begin{equation}\label{chi2}
\chi_2:=(1+2t^2+t^4)\chi_{2,0}+t^2\chi_{2,1}+2(t+t^3)\chi_{2,2} 
\end{equation} 
where  $\chi_{2,0}$ is the Poincar\'e series of 
$\Hom_{\FF_2[{\goth S}_3]}(St,\FF_2[y_1,y_2])$, $\chi_{2,1}$ is the Poincar\'e series of   
$\Hom_{\FF_2[{\goth S}_3]}(St,St\otimes St\otimes \FF_2[y_1,y_2])$ and $\chi_{2,2}$ 
is that of $\Hom_{\FF_2[{\goth S}_3]}(St,St\otimes \FF_2[y_1,y_2])$. 

Furthermore it is well known (and elementary to verify) 
that there is an isomorphism of $\FF_2[{\goth S}_3]$-modules  
$St\oplus St\oplus\FF_2[{\goth S}_3/A_3]\cong \FF_2[{\goth S}_3]$ and 
therefore an isomorphism 
\begin{align*}
\FF_2[y_1,y_2]\cong &\ 
\Hom_{\FF_2[{\goth S}_3]}(St\oplus St\oplus\FF_2[{\goth S}_3/A_3],\FF_2[y_1,y_2]) \\
\cong &\ \Hom_{\FF_2[{\goth S}_3]}(St,\FF_2[y_1,y_2])^{\oplus 2}\oplus \FF_2[y_1,y_2]^{A_3} \ .  
\end{align*}

Together with the elementary fact that the $A_3$-invariants $\FF_2[y_1,y_2]^{A_3}$ 
form a free module over $\FF_2[y_1,y_2]^{\goth S}_3\cong \FF_2[c_2,c_3]$ 
on two generators $1$ and $y_1^3+y_1y_2^2+y_2^3$ of degree $0$ resp. $6$ 
this implies 
$$
2\chi_{2,0}+{1+t^6\over (1-t^4)(1-t^6)}={1\over (1-t^2)^2}
$$ 
and hence  
\begin{equation}\label{chi20}
\chi_{2,0}={t^2\over (1-t^2)(1-t^6)}\ .  
\end{equation}

It is elementary to check that $St$ and $\FF_2[{\goth S}_3/A_3]$ are both self-dual 
$\FF_2[{\goth S}_3]$-modules and hence Lemma \ref{Steinberg-lemma} gives 
$$
St\otimes St^*\cong \FF_2[{\goth S}_3/A_3]\oplus St 
$$ 
and 
\begin{align*}
St\otimes St^*\otimes St^*\cong &\ (\FF_2[{\goth S}_3/A_3]\oplus St)\otimes St^* \\
\cong &\ (\FF_2[{\goth S}_3/A_3]\otimes St)\oplus (St\otimes St) \\
\cong &\ St\oplus St\oplus St\oplus \FF_2[{\goth S}_3/A_3]   \ . \\
\end{align*}

Therefore, if $\chi_{\FF_2[y_1,y_2]^{A_3}}$ denotes the Poincar\'e series of the $A_3$-invariants then  
\begin{equation}\label{chi21}
\chi_{2,1}=3\chi_{2,0}+\chi_{\FF_2[y_1,y_2]^{A_3}}
={3t^2\over (1-t^2)(1-t^6)}+{1+t^6\over (1-t^4)(1-t^6)}
={1+3t^2+3t^4+t^6\over (1-t^4)(1-t^6)} 
\end{equation}
\begin{equation}\label{chi22}
\chi_{2,2}=\chi_{2,0}+\chi_{\FF_2[y_1,y_2]^{A_3}}
={t^2\over (1-t^2)(1-t^6)}+{1+t^6\over (1-t^4)(1-t^6)}
={1+t^2+t^4+ t^6\over (1-t^4)(1-t^6)} \ . 
\end{equation}
Finally (\ref{chi2}), (\ref{chi20}), (\ref{chi21}) and (\ref{chi22})  give 
\begin{align*} 
\chi_2=&\ {(1+2t^2+t^4)t^2(1+t^2)+t^2(1+3t^2+3t^4+t^6)+2(t+t^3)(1+t^2+t^4+t^6)\over (1-t^4)(1-t^6)}  \\
=&\ {2t^2(1+3t^2+3t^4+t^6)+2t(1+2t^2+2t^4+2t^6+t^8)\over (1-t^4)(1-t^6)} \ ,  \\
\end{align*} 
and this finishes the proof. 
\end{proof}

Theorem \ref{SES} is now an immediate consequence of Theorem \ref{CSS} and the following result.  

\medskip

\begin{prop}\label{lim-lim1} Let $p=2$ and $\Gamma=SL_3(\Z[\frac{1}{2},i])$. 

a) There is an isomorphism of graded $\FF_2$-algebras  
$$
\lim_{\cA_*(\Gamma)}H^*(C_{\Gamma}(E);\FF_2)\cong \FF_2[b_2,b_3]\otimes 
E(d_3,d_3',d_5,d_5') \ . 
$$ 
Furthermore, if we identify this limit with a subalgebra of 
$H^*(C_{\Gamma}(E_1);\FF_2)\cong \FF_2[c_1,c_2]\otimes E(e_1,e_1',e_3,e_3')$ then 
\begin{align*}
\begin{array}{ll}
b_2=c_1^2+c_2\ \  & b_3=c_1c_2 \\
d_3=e_3\ \ & d_5=c_1e_3+c_2e_1 \\
d_3'=e_3'\ \  &d_5'=c_1e_3'+c_2e_1'\ . \\ 
\end{array}
\end{align*}

b) There is an isomorphism  of graded $\FF_2$-vector spaces 
$$
\lim_{\cA_*(\Gamma)}^1H^*(C_{\Gamma}(E);\FF_2)\cong 
\Sigma^3\FF_2\oplus\Sigma^3\FF_2\oplus \Sigma^6\FF_2  \ . 
$$

c) For any $s>1$ 
$$
\lim_{\cA_*(\Gamma)}^sH^*(C_{\Gamma}(E);\FF_2)=0 \ . 
$$
\end{prop}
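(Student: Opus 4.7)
The plan is to apply the 4-term exact sequence (\ref{lim-exactsequence}) to $F = H^*(C_{\Gamma}(-);\FF_2)$, computing $\ker\varphi$ and $\mathrm{coker}\,\varphi$ explicitly. Part (c) is immediate from Theorem \ref{CSS} since $r_2(\Gamma) = 2$, so only (a) and (b) require work.

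The key preliminary observation is that $\ker\varphi$ coincides with $\{x \in F(E_1) : \alpha_*(x) \in F(E_2)^{{\goth S}_3}\}$. Indeed, because $\tau \in {\goth S}_3$ stabilizes the chosen morphism $\alpha: E_1 \to E_2$ among the three available ones, $\alpha_*(x)$ is automatically $\tau$-invariant; the vanishing of both $\varphi(x)(a)$ and $\varphi(x)(b)$ then amounts to $\sigma$-invariance of $\alpha_*(x)$, and combined with $\tau$-invariance this is ${\goth S}_3$-invariance.

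First I would verify, using (\ref{restriction}) and (\ref{sigma-action}), that each of the six candidate generators lies in $\ker\varphi$: one computes $\alpha_*(b_2) = y_1^2 + y_1 y_2 + y_2^2$ and $\alpha_*(b_3) = y_1^2 y_2 + y_1 y_2^2$, exactly the Dickson invariants for ${\goth S}_3 \cong GL_2(\FF_2)$ acting on $\FF_2[y_1,y_2]$, while a direct bookkeeping using (\ref{sigma-action}) shows $\alpha_*(d_3) = y_1 x_2 + y_2 x_1$ and $\alpha_*(d_5) = y_1^2 x_2 + y_2^2 x_1$ are $\sigma$-invariant, with the primed versions obtained by replacing $x_i$ by $x_i + x_i'$. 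Algebraic independence of these six classes in $F(E_1)$ (checked via their images under the injective $\alpha_*$) then yields a subalgebra $B \subseteq \ker\varphi$ isomorphic as graded $\FF_2$-algebra to $\FF_2[b_2,b_3] \otimes E(d_3,d_3',d_5,d_5')$.

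The hard step is the reverse inclusion $\ker\varphi \subseteq B$. I would compute $F(E_2)^{{\goth S}_3}$ as a free module over $\FF_2[b_2,b_3]$ by decomposing $E(x_1,x_1',x_2,x_2') \cong E(St \oplus St)$ as an ${\goth S}_3$-module via Lemma \ref{Steinberg-lemma}, and then use the injectivity of $\alpha_*$ recorded after (\ref{restriction}) to identify $\alpha_*(F(E_1)) \cap F(E_2)^{{\goth S}_3}$ with $\alpha_*(B)$. A Poincar\'e series cross-check reinforces the conclusion: substituting the Poincar\'e series $\chi_B = (1+t^3)^2(1+t^5)^2/((1-t^4)(1-t^6))$ and $\chi_{F(E_1)} = (1+t)^2(1+t^3)^2/((1-t^2)(1-t^4))$ into the exact sequence and using $\chi_2$ from the preceding lemma yields
\[
\chi_{\lim^1} \;=\; \chi_2 - \chi_{F(E_1)} + \chi_B \;=\; 2t^3 + t^6,
\]
which is precisely the Poincar\'e series of $\Sigma^3\FF_2 \oplus \Sigma^3\FF_2 \oplus \Sigma^6\FF_2$. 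The individual graded pieces in degrees $3$ and $6$ are then pinned down by direct inspection of the cokernel in these low degrees.
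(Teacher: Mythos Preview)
Your proposal is correct and follows the paper's proof closely. You use the same $4$-term exact sequence (\ref{lim-exactsequence}), verify that the six named elements lie in $\ker\varphi$ via the same explicit formulas (these appear in the paper as (\ref{lots})), invoke the identical Poincar\'e series identity $\chi_0+\chi_2=\chi_1+(2t^3+t^6)$, and leave a residual low-degree check. Your remark that $\ker\varphi=\alpha_*^{-1}\big(F(E_2)^{\goth S_3}\big)$ rather than merely $\alpha_*^{-1}\big(F(E_2)^{A_3}\big)$ is a harmless sharpening: since the permutation matrix realising $\tau$ already lies in $C_\Gamma(E_1)$, the image of $\alpha_*$ is automatically $\tau$-stable, so the two descriptions coincide, and the paper simply uses the $A_3$-invariants.

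One point of logic to keep straight: the Poincar\'e series identity alone yields only
\[
\dim\ker\varphi-\chi_B \;=\; \dim\mathrm{coker}\,\varphi-(2t^3+t^6)
\]
degree by degree, so it does not by itself force $\ker\varphi=B$ outside degrees $3$ and $6$. In your plan the genuine work is the direct identification of $\alpha_*(F(E_1))\cap F(E_2)^{\goth S_3}$ with $\alpha_*(B)$ that you outline; once that is done the Poincar\'e series immediately gives $\chi_{\lim^1}=2t^3+t^6$, and no separate low-degree inspection is needed. The paper organises these same ingredients in the opposite order (Poincar\'e series first, then a direct verification in degrees $3$ and $6$ left to the reader), but the substance is the same.
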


\begin{proof} a) It is straightforward to check that the subalgebra of 
$\FF_2[c_1,c_2]\otimes E(e_1,e_1',e_3,e_3')$ generated by the elements 
$c_1^2+c_2$, $c_1c_2$, $e_3,e_3',c_1e_3+c_2e_1,c_1e_3'+c_2e_1'$  is isomorphic to 
the tensor product of a polynomial algebra on two generators $b_2$ and $b_3$ of degree $4$ and 
$6$ and an exterior algebra on $4$ generators $d_3$, $d_3'$, $d_5$ and $d_5'$ of degree $3$, $3$, $5$ 
and $5$. In fact, it is clear that $c_1^2+c_2$ and $c_1c_2$ 
are algebraically independant and the elements 
$e_3,e_3',c_1e_3+c_2e_1,c_1e_3'+c_2e_1'$ are exterior classes; their product is given as 
$c_2^2e_3e_3'e_1e_1'\neq 0$, and  this implies easily that the exterior monomials in these elements 
are linearly independant over the polynomial algebra generated by $c_1^2+c_2$, $c_1c_2$. 
From now on we identify $b_2$, $b_3$, $d_3$, $d_3'$, $d_5$ and $d_5'$ with $c_1^2+c_2$, $c_1c_2$, 
$e_3$, $e_3'$, $c_1e_3+c_2e_1$ and $c_1e_3'+c_2e_1'$.

Now we use the exact sequence (\ref{lim-exactsequence}) and the description of $\varphi$ 
to determine the inverse limit. Because $\a_*$ is injective, we see that if we identify 
$H^*(C_{\Gamma}(E_1);\FF_2)$ with its image in $H^*(C_{\Gamma}(E_2);\FF_2)$ 
then the inverse limit can 
be identified with the intersection of the image of $\a_*$ with the invariants in 
$\FF_2[y_1,y_2]\otimes E(x_1,x_1',x_2,x_2')$ with respect to the action of the cyclic group of order 
$3$ of $\Aut_{\cA}(E_2)\cong {\goth S}_3$ generated by $\sigma$. This action has been described in 
(\ref{sigma-action}) and with these formulas it is straightfoward to check that the elements 
\begin{equation}\label{lots}
\begin{array}{lll} 
b_2&=&y_1^2+y_1y_2+y_2^2 \\ b_3&=&y_1y_2(y_1+y_2)  \\
d_3&=&y_1x_2+y_2x_1 \\ d_5&=&(y_1+y_2)(y_1x_2+y_2x_1)+y_1y_2(x_1+x_2) =y_1^2x_2+y_2^2x_1\\  
d_3'&=&y_1(x_2+x_2')+y_2(x_1+x_1')  \\    
d_5'&=&(y_1+y_2)(y_1(x_2+x_2')+y_2(x_1+x_1'))+y_1y_2(x_1+x_1'+x_2+x_2') \\
       &=& y_1^2(x_2+x_2')+y_2^2(x_1+x_1') \ . \\ 
\end{array}
\end{equation}
all belong to the inverse limit. 

Now consider the following Poincar\'e series 
\begin{align*}
\begin{array}{ll}
\chi_0:= \sum_{n\geq 0}\dim_{\FF_2}(\FF_2[b_2,b_3]\otimes E(e_3,e_3',e_5,e_5')^n)t^n
={(1+t^3)^2(1+t^5)^2\over (1-t^4)(1-t^6)} \\ 
\\
\chi_1:= \sum_{n\geq 0}\dim_{\FF_2}H^n(C_{\Gamma}(E_1);\FF_2)t^n= 
{(1+t)^2(1+t^3)^2\over (1-t^2)(1-t^4)} \\
\\
\chi_2:= {2t^2(1+3t^2+3t^4+t^6)+2t(1+2t^2+2t^4+2t^6+t^8)\over (1-t^4)(1-t^6)} \ .\\ 
\end{array}
\end{align*}
Then we have the following identity 
$$
\chi_0+\chi_2-\chi_1={p\over (1-t^4)(1-t^6)}
$$ 
with 
\begin{align*}
p=&\ (1+t^3)^2(1+t^5)^2+2t^2(1+3t^2+3t^4+t^6)\\
&\ +2t(1+2t^2+2t^4+2t^6+t^8)-(1+t)^2(1+t^3)^2(1+t^2+t^4) \\
=&\  2t^3+t^6-2t^7-2t^9-t^{10}-t^{12}+2t^{13}+t^{16} 
=(2t^3+t^6)(1-t^4)(1-t^6)  
\end{align*}
and therefore 
\begin{equation}\label{chi-identity}
\chi_0+\chi_2=\chi_1+(2t^3+t^6)\ .    
\end{equation}
This together with the fact that $\lim_{\cA_*(\Gamma)}H^*(C_{\Gamma}(E);\FF_2)$ 
contains a subalgebra which is isomorphic to 
$\FF_2[b_2,b_3]\otimes\Lambda(d_3,d_3',d_5,d_5')$  
already  implies that the sequence
$$
0\to \FF_2[b_2,b_3]\otimes E(d_3,d_3',d_5,d_5')\to H^*(C_{\Gamma}(E_1);\FF_2)
\buildrel{\varphi}\over\longrightarrow 
\Hom_{\FF_2[{\goth S}_3]}(St,H^*(C_{\Gamma}(E_1);\FF_2))\to 0 
$$ 
in which the left hand arrow is given by inclusion 
is exact except possibly in dimensions $3$ and $6$.   

In order to complete the proof of a)  it is now enough to verify that 
in degrees $3$ and $6$ the inverse limit is not bigger than 
$\FF_2[b_2,b_3]\otimes E(d_3,d_3',d_5,d_5')$. 
We leave this straightforward verification to the reader.  

Then b) follows immediately from (a) together with (\ref{chi-identity}) and the exact sequence 
(\ref{lim-exactsequence}), and (c) follows from Theorem \ref{CSS} and the fact that $r_2(G)=2$. 
\end{proof} 

We can now give the proof of Theorem \ref{mainthm}. 

\begin{proof}  a) The exact sequence of Theorem \ref{SES} is obtained from the exact sequence 
(\ref{SES1}) via Proposition \ref{lim-lim1}. Therefore the epimorphism of Theorem \ref{SES} 
is the edge homomorphism of the centralizer spectral sequence. 
The result then follows from (\ref{comp}) by observing that we have identified the target of the 
edge homomorphism with the subalgebra $\FF_2[b_2,b_3]\otimes E(d_3,d_3',d_5,d_5')$
of $H^*(C_{\Gamma}(E_1);\FF_2)$ and by recalling that $C_{\Gamma}(E_1)$ is equal to 
$SD_3(\Z[\frac{1}{2},i])$.

b) The two ring homomorphisms $\pi, \pi':\Z[\frac{1}{2},i]\to \FF_5$ of (\ref{reduc}) 
determine  homomorphisms $SL_3(\Z[\frac{1}{2},i])\subset GL_3(\Z[\frac{1}{2},i])\to GL_3(\FF_5)$. 
By \cite{Q2} we have 
$$
H^*GL_3(\FF_5);\FF_2)\cong \FF_3[c_1,c_2,c_3]\otimes E(q_1,q_3,q_5) \ . 
$$ 
We get a well defined homomorphism of $\FF_2$-graded algebras    
$$
\varphi:\FF_2[c_2,c_3]\otimes E(e_3,e_3',e_5,e_5')\to H^*(\Gamma;\FF_2)
$$
by sending $c_i$ to the $i$-th Chern class of the tautological $3$-dimensional representation of 
$\Gamma$ and by declaring $\varphi(e_i)=\pi^*(q_i)$ and $\varphi(e_i')=\pi'^*(q_i')$ for $i=3,5$.  
The classes $q_1$ resp. $q_3$ resp.  $q_5$ are the symmetrisations of $x_1$ resp. $y_1x_2$ 
resp. $y_1y_2x_3$ with respect to the natural action of ${\goth S}_3$ on 
$H^*(GL_3(\FF_5);\FF_2)\cong \FF_2[y_1,y_2,y_3]\otimes E(x_1,x_2,x_3)$  
(cf. (\ref{e-classes})  below). 

Next we determine the composition $\psi\phi$. The universal Chern classes $c_i$ are the elementary 
symmetric polynomials in variables, say $y_i$, and the inclusion 
$GL_2(\C)\subset SL_3(\C)\subset GL_3(\C)$ imposes  the relation $y_1+y_2+y_3=0$.  
This implies that the behaviour of $\psi$ on Chern classes is given by  
$$
c_1\mapsto 0,\ \ c_2\mapsto c_1^2+c_2=y_1^2+y_1y_2+y_2^2=b_2,\ \ c_3\mapsto 
c_1c_2=y_1y_2(y_1+y_2)=b_3\ . 
$$
In these equations we have identified  $H^*(GL_2(\Z[\frac{1}{2},i];\FF_2)$, as in the proof of Proposition 
\ref{lim-lim1}, via restriction with a subalgebra of $\FF_2[y_1,y_2]\otimes E(x_1,x_1',x_3,x_3')$. 

In order to determine the composition $\psi\varphi$ on the classes $e_3$, $e_3'$, $e_5$ and $e_5'$  we 
calculate at the level of $\FF_5$ and use naturality with respect to the homomorphisms induced by 
$\pi$ and $\pi'$. In fact, the inclusion 
$$
j:GL_2(\FF_5)\subset SL_3(\FF_5)\subset GL_3(\FF_5)
$$ 
induce in cohomology a map 
$$
\FF_3[c_1,c_2,c_3]\otimes E(q_1,q_3,q_5)\to \FF_2[c_1,c_2]\otimes E(e_1,e_3)
\subset  \FF_2[y_1,y_2]\otimes E(q_1,q_3) 
$$ 
which is easily determined from (\ref{e-classes}) below by imposing the relations $y_1+y_2+y_3=0$ and 
$x_1+x_2+x_3=0$ on the symmetrisation of the classes $y_1x_2$ resp. $y_1y_2x_3$ 
with respect to the natural action of ${\goth S}_3$ on the cohomology of diagonal matrices
$H^*(D_3(\FF_5);\FF_2)\cong \FF_2[y_1,y_2,y_3]\otimes E(x_1,x_2,x_3)$. 
Explicitly we get 
\begin{align*}
c_1\mapsto 0,\ \  c_2\mapsto y_1^2+y_1y_2+y_2^2,\ \ 
c_3\mapsto y_1y_2(y_1+y_2)  \\
q_1\mapsto 0, \ \ q_3\mapsto y_1x_2+y_2x_1,\ \ q_5\mapsto y_1^2x_2+y_2^2x_1 
\end{align*}
and if $i$ denotes the inclusion 
$$
GL_2(\Z[\frac{1}{2},i])\subset SL_3(\Z[\frac{1}{2},i])\subset GL_3(\Z[\frac{1}{2},i])
$$
then (\ref{pipi'}) and (\ref{lots}) imply  
\begin{equation*}
\begin{array}{lll lll lll} 
\psi(\varphi(e_3))&=&i^*(\pi^*(q_3))&=&\pi^*j^*(q_3)&=&\pi^*(y_1x_2+y_2x_1)&=&d_3 \\
\psi(\varphi(e_5))&=&i^*(\pi^*(q_5))&=&\pi^*j^*(q_5)&=&\pi^*(y_1^2x_2+y_2^2x_1)&=&d_5 \\ 
\psi(\varphi(e_3'))&=&i^*(\pi'^*(q_3))&=&\pi'^*j^*(q_3)&=&\pi'^*(y_1x_2+y_2x_1)&=&d_3' \\
\psi(\varphi(e_5'))&=&i^*(\pi'^*(q_5))&=&\pi'^*j^*(q_5)&=&\pi'^*(y_1^2x_2+y_2^2x_1)&=&d_5' \\ 
\end{array}
\end{equation*}
where we have identified the target of $\psi$ with a subalgebra of $H^*(GL_2(\Z[\frac{1}{2},i];\FF_2)$ 
and the latter via restriction with a subalgebra of $\FF_2[y_1,y_2]\otimes E(x_1x_1',x_3,x_3')$. 

c) The space $X$ can be taken to be the product of symmetric space $X_{\infty}:=SL_3(\C)/SU(2)$ 
and the Bruhat-Tits building $X_2$ for $SL_3(\QQ_2[i])$. Now 
$SL_3(\QQ_2[i])\backslash X_2$ is a $2$-simplex (cf. \cite{Buildings})  
and the projection map $X\to X_2$ induces a map 
$$
SL_3(\QQ_2[i])\backslash X\to SL_3(\QQ_2[i])\backslash X_2
$$
whose fibres have the homotopy type of a $6$-dimensional 
$SL_3(\Z[\frac{1}{2},i])$-invariant deformation retract (cf. section \ref{comm}). 
Therefore we get $H^n_G(X,X_s;\FF_2)=0$ 
if $n>8$ and the inclusion $X_s\subset X$ induces an isomorphism 
$H^n_G(X;\FF_2) \cong H^n_G(X_s;\FF_2)$ if $n>8$.  Then part c) simply follows from a) except for the 
finiteness statement for the kernel for which we refer to (\ref{E1}) and (\ref{E1-finite}) below.
\end{proof}

\section{Comments on step 2}\label{comm}  

The situation for $p=2$ and $G=SL_3(\Z[{1\over 2},i])$ is analogous to the situation for 
$p=2$ and 
$G=SL_3(\Z[{1\over 2}])$ for which step 2 was carried out in \cite{sl3} 
via a detailed study of the relative cohomology $H^*_G(X,X_s;\FF_2)$ for $X$ equal to 
the product of the symmetric space $X_{\infty}:=SL_3(\R)/SO(3)$  with the Bruhat-Tits building $X_2$ 
for $SL_3(\QQ_2)$; the spaces involved had a few hundred cells and the calculation was painful.  
In the case of $SL_3(\Z[\frac{1}{2},i])$ with $X$ the  product of $SL_3(\C)/SU(3)$ 
with the Bruhat-Tits building for  $SL_3(\QQ_2[i])$ the calculational complexity of the second step is much 
more involved and an explicit calculation by hand does not look feasible. 
However, in recent years there have been a lot of machine aided calculations of the cohomology of 
various arithmetic groups (for example \cite{GGetc}, \cite{BR}) and a machine aided calculation 
seems to be within reach. 

The natural strategy for undertaking this second step is to follow the same path as in \cite{sl3}. 
The equivariant cohomology $H^*_{\Gamma}(X,X_s;\FF_2)$ can be studied via the spectral sequence 
of the projection map 
$$
p: X=X_{\infty}\times X_2\to X_2\ . 
$$ 
This gives a spectral sequence  with 
\begin{equation}\label{E1}
E_1^{s,t}\cong 
\bigoplus_{\sigma\in \Lambda_s}H^t_{\Gamma_{\sigma}}(X_{\infty},X_{\infty,s};\FF_2) 
\Longrightarrow H^{s+t}_{\Gamma}(X,X_s;\FF_2) \ . 
\end{equation} 
Here $\Lambda_s$ indexes the $s$-dimensional cells in the orbit space of $X_2$ with respect 
to the action of $\Gamma$. The orbit space is a $2$-simplex, i.e. $\Lambda_0$ and $\Lambda_1$ 
contain $3$ elements and $\Lambda_2$ is a singleton. Furthermore $\Gamma_{\sigma}$ 
is the isotropy group of a chosen representative in $X_2$ of the cell $\sigma$ 
in the quotient space. For fixed $s$ all $s$-dimensional cells have isomorphic isotropy groups 
because the $\Gamma$-action on the Bruhat-Tits building is the restriction of a natural action of 
$GL_3(\Z[\frac{1}{2},i])$ on $X_2$ and this action is transitive on the set of $s$-dimensional cells 
(cf. \cite{Buildings}).  

Therefore all isotropy subgroups for the action on $X_2$ are, up to isomorphism,  
subgroups of $SL_3(\Z[i])$ which itself appears as isotropy group of a $0$-dimensional cell in $X_2$. 
The isotropy groups of $1$-dimensional and $2$-dimensional cells are isomorphic to 
well-known congruence subgroups of $SL_3(\Z[i])$. 
By the Soul\'e-Lannes method the fibre $X_{\infty}$ of the projection map $p$ admits a $6$-dimensional 
$SL_3(\Z[i])$-equivariant deformation retract (the space of ``well-rounded hermitean forms" modulo 
arithmetic equivalence)  with compact quotient (cf. \cite{Ash}) and therefore we have 
\begin{equation}\label{E1-finite} 
E_1^{s,t}=0\ \text{unless}\ s=0,1,2,\ 0\leq t\leq 6,\ \text{and}\ \dim_{\F_2}E_1^{s,t}<\infty\ 
\text{for all}\  (s,t)\ . 
\end{equation} 

The $E_1$-term of this spectral sequence should be accessible to machine calculation. 
The spectral sequence will necessarily degenerate at $E_3$ and the calculation of the 
$d_1$-differential and, if necessary the $d_2$-differential, is likely to need human intervention, 
as it was necessary in the case of $SL(3,\Z[\frac{1}{2}])$ (cf. section 3.4 of \cite{sl3}). 
Likewise the calculation of the connecting homomorphism for the mod-$2$ Borel cohomology 
of the pair $(X,X_s)$ is likely to require human intervention. 
\smallskip

\section{Relation to Quillen's conjecture}\label{RQC}

The next result gives gives 2 reformulations of Quillen's conjecture which we had briefly 
discussed in the introduction. The classes $e_{2k-1},e_{2k-1}'$ figuring in part c)  
will be introduced in (\ref{e-classes}) below. 

\begin{thm}\label{QC} Suppose $n\geq 2$. Then the following statements are equivalent.  

a)  $C(n,\Z[\frac{1}{2},i],2)$ holds, i.e. $H^*(GL_n(\Z[\frac{1}{2},i]);\FF_2)$ 
is a free module over $\Z/2[c_1,\ldots,c_n]$ where the $c_i$ are the mod-$2$ Chern classes 
of the tautological $n$-dimensional complex representation of $GL_n(\Z[\frac{1}{2},i])$. 

b) The restriction homomorphism $H^*(GL_n(\Z[\frac{1}{2},i]);\FF_2)\to H^*(D_n(\Z[\frac{1}{2},i]);\FF_2)$   
is injective where $D_n(\Z[\frac{1}{2},i])$ denotes the subgroup of diagonal matrices in 
$GL_n(\Z[\frac{1}{2}])$.  

c)  There are isomorphisms 
$$
H^*(GL_n(\Z[{1\over 2},i]);\FF_2)\cong \FF_2[c_1,\ldots,c_n]\otimes 
E(e_1,e_1',\ldots, e_{2n-1},e_{2n-1}')
$$ 
where the classes $c_k$ are the Chern classes of the tautological $n$-dimensional complex 
representation of $GL_n(\Z[\frac{1}{2},i])$ and the classes 
$e_{2k-1},e_{2k-1}'$ are of cohomological degree $2k-1$ for $k=1,\ldots,n$.   
\end{thm}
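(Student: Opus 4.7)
The plan is to construct a single canonical ring map whose source is exactly the algebra appearing in (c), prove it is always injective via restriction to the diagonal subgroup, and then read off the three equivalences.

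I would begin by building the ring map
\[
\Phi: \FF_2[c_1,\ldots,c_n] \otimes E(e_1,e_1',\ldots,e_{2n-1},e_{2n-1}') \longrightarrow H^*(GL_n(\Z[\tfrac12,i]);\FF_2),
\]
mimicking the construction used in the proof of Theorem \ref{mainthm}(b). The $c_k$ are the Chern classes of the tautological $n$-dimensional complex representation; and $e_{2k-1} := \pi^*(q_{2k-1})$, $e_{2k-1}' := \pi'^*(q_{2k-1})$ are pulled back from Quillen's exterior generators $q_{2k-1} \in H^{2k-1}(GL_n(\FF_5);\FF_2)$ along the two reductions $\pi, \pi': \Z[\tfrac12,i] \to \FF_5$ distinguished by $i \mapsto 3$ and $i \mapsto 2$. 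The source has Poincar\'e series $P(t) := \prod_{k=1}^n (1+t^{2k-1})^2/(1-t^{2k})$.

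The next step is to compute $\rho \circ \Phi$, where $\rho$ is the restriction to $H^*(D_n(\Z[\tfrac12,i]);\FF_2) \cong \FF_2[y_1,\ldots,y_n] \otimes E(x_1,x_1',\ldots,x_n,x_n')$. The Chern classes restrict to elementary symmetric polynomials in the $y_i$; Quillen's $q_{2k-1}$ restricts in $H^*(D_n(\FF_5);\FF_2)$ to the $S_n$-symmetrisation of $y_{i_1}\cdots y_{i_{k-1}} x_{i_k}$, generalising formula (\ref{rest}); and the two reductions act via the extension of (\ref{pipi'}) to $n$ factors. A direct algebraic-independence calculation, strictly generalising (\ref{restriction}) from $n=2$, shows $\rho \circ \Phi$ is injective. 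In particular $\Phi$ itself is injective, and its image has Poincar\'e series $P(t)$.

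The equivalences now go as follows. That (c) $\Rightarrow$ (a) is the free-module description; that (c) $\Rightarrow$ (b) is because $\rho = (\rho\circ\Phi)\circ\Phi^{-1}$ inherits the injectivity of $\rho\circ\Phi$. For (a) $\Rightarrow$ (c), freeness of $H^*(GL_n(\Z[\tfrac12,i]);\FF_2)$ over $\FF_2[c_1,\ldots,c_n]$ combined with the injectivity of $\Phi$ makes $\Phi$ split as a map of $\FF_2[c_1,\ldots,c_n]$-modules and exhibits its cokernel as a free $\FF_2[c_1,\ldots,c_n]$-module, whose Poincar\'e series vanishes by a comparison with $P(t)$. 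For (b) $\Rightarrow$ (c), the image $\operatorname{Im}(\rho)$ is $S_n$-invariant by conjugation-naturality, and sits between $\operatorname{Im}(\rho\circ\Phi)$ and $H^*(D_n;\FF_2)^{S_n}$; an invariant-theoretic identification of $\operatorname{Im}(\rho\circ\Phi)$ with an explicit subalgebra (generalising the calculation behind Lemma \ref{Steinberg-lemma}) combined with a Poincar\'e series match forces the desired equality, hence surjectivity of $\Phi$.

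\textbf{Main obstacle.} The crux is the Poincar\'e series or invariant-theoretic input needed in the converse directions: identifying the precise invariant subalgebra of $\FF_2[y_1,\ldots,y_n] \otimes E(x_1,x_1',\ldots,x_n,x_n')$ in which $\operatorname{Im}(\rho)$ must land, and matching it with $\operatorname{Im}(\rho \circ \Phi)$ degree by degree. The twofold decoration $e_{2k-1}, e_{2k-1}'$ — which exists precisely because $\Z[\tfrac12,i]$ admits two distinct reductions to $\FF_5$ — is essential for the image of $\rho \circ \Phi$ to have any chance of exhausting this invariant subalgebra, and the main labour consists in making this exhaustion explicit in all degrees.
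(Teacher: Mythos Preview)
Your setup (the map $\Phi$, and the injectivity of $\rho\circ\Phi$) is fine and matches the paper's framework. The converse implications, however, both rest on an upper bound for $H^*(GL_n(\Z[\tfrac12,i]);\FF_2)$ that you never actually supply.

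For (a) $\Rightarrow$ (c): freeness over $\FF_2[c_1,\ldots,c_n]$ says nothing about the \emph{rank} of the free module, so there is no ``comparison with $P(t)$'' available --- the cokernel of $\Phi$ could be any free module whatsoever. (Nor is it automatic that an injection between graded free modules over a polynomial ring splits; think of multiplication by $c_1$.) The paper avoids this entirely by proving (a) $\Rightarrow$ (b) instead: under (a) the top Dickson invariant $\omega\in\FF_2[c_1,\ldots,c_n]$ is a non-zero-divisor in $H^*(GL_n;\FF_2)$, yet it restricts trivially to every elementary abelian $2$-subgroup of rank $<n$; Corollary I.5.8 of \cite{HLS3} then forces restriction to the centralizer of the maximal one, namely $D_n$, to be injective.

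For (b) $\Rightarrow$ (c): your sandwich $\mathrm{Im}(\rho\circ\Phi)\subset\mathrm{Im}(\rho)\subset H^*(D_n;\FF_2)^{S_n}$ does not close, because the $S_n$-invariants are strictly larger than $C_n:=\mathrm{Im}(\rho\circ\Phi)$ already for $n=2$ (for instance $x_1x_2$ is $S_2$-invariant but not in $C_2$). The paper's Proposition \ref{res} obtains the correct upper bound by a different mechanism: the block-diagonal embeddings $GL_{n-1}\times GL_1\subset GL_n$ and $GL_{n-2}\times GL_2\subset GL_n$ give, by induction, $B_n\subset (C_{n-1}\otimes C_1)\cap(C_{n-2}\otimes C_2)$, and then a lexicographic leading-monomial analysis (the condition $a_j-a_{j+1}\ge\varepsilon_{1,j+1}+\varepsilon_{2,j+1}$ of (\ref{image})) shows that every element of $B_n$ can be reduced into $C_n$. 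This inductive-combinatorial step, not $S_n$-invariance or a Poincar\'e-series count, is where the content lies.
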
 

\begin{proof}  It is trivial that $(c)$ implies $(a)$.   

In order to show that (a) implies (b) we observe that $D_n(\Z[\frac{1}{2},i])$ is the centralizer of the unique, 
up to conjugacy, maximal elementary abelian $2$-subgroup $E_n$ of  $GL_n(\Z[\frac{1}{2},i])$ given by 
the subgroup of diagonal matrices of order $2$.  Now consider the top Dickson invariant $\omega$ in 
$H^*(BGL_n(\C);\FF_2)$, i.e. the class whose restriction to $H^*B(\prod_{i=1}^nGL_1(\C));\F_2)$ 
is the product of all non-trivial classes of degree $2$. The image of $\omega$ in 
$H^*(GL_n(\Z[{1\over 2},i]);\F_2)$ restricts trivially to the cohomology of all elementary abelian 
$2$-subgroups $E$ of $GL_n(\Z[{1\over 2},i])$ of rank less than $n$. 
If (a) holds then the image of $\omega$ is not a zero divisor in $H^*(GL_n(\Z[{1\over 2},i]);\F_2)$ 
and hence Corollary I.5.8 of \cite{HLS3} implies that the restriction to the centralizer of $E_n$ 
is injective. 

The implication $(b)\Rightarrow (c)$ follows from Proposition \ref{res} below. 
\end{proof}
\smallskip 

Before we go on we introduce the classes $e_{2k-1}$ and $e_{2k-1}'$. As in the case of $GL_2$ they 
are obtained from Quillen's classes $q_{2k-1}\in H^{2k-1}(GL_n(\FF_5);\FF_2)$ \cite{Q2} 
which restrict  in the cohomology of diagonal matrices in $\FF_5$  
to the symmetrization of the class $y_1\ldots y_{k-1}x_k$ where $y_k$ is of cohomological degree $2$ 
corresponding to the $k$-th factor in the product $\prod_{k=1}^n\FF_5^{\times}$ and $x_k$ is of 
cohomolo{\-}gical degree $1$ of the same factor. Then we define 
\begin{equation}\label{e-classes}
e_{2k-1}:=\pi^*(q_{2k-1}), \ \ \ e_{2k-1}':=\pi'^*(q_{2k-1}) 
\end{equation}
where $\pi,\pi'$ are the two ring homomorphisms $\Z[\frac{1}{2},i]\to \FF_5$ 
with $\pi$ sending $i$ to $3$ and $\pi'$ sending $i$ to $2$ which we considered earlier in section 
\ref{CSSGamma}. If we identify the mod-$2$ cohomology $H^*(D_n(\Z[{1\over 2},i]);\FF_2)$ 
with $\FF_2[y_1,\ldots y_n]\otimes E(x_1,x_1'\ldots,x_n,x_n')$ with $y_k$, $k=1,\ldots,n$ 
of degree $2$ and $x_k,x_k'$, $k=1,\ldots,n$ of degree $1$ where as before we choose $x_k$ and $x_k'$ 
to be the basis which is dual to the basis of the $k$-th factor in 
$$D_n(\Z[{1\over 2},i])/D_n(\Z[{1\over 2},i])^2\cong 
\Big(\Z[{1\over 2},i]^{\times}/(\Z[{1\over 2},i]^{\times})^2\Big)^n
$$  
given by the classes of $i$ and $1+i$ then we get the following lemma which generalizes 
(\ref{restriction}) and whose straighforward proof we leave to the reader.

\begin{lem}\label{e-classes1} The class $e_{2k-1}$ restricts in the cohomology of the subgroup of 
diagonal matrices $H^*(D_n(\Z[\frac{1}{2},i];\FF_2))$ 
to the symmetrization of $y_1\ldots y_{k-1}x_k$ and the class $e_{2k-1}'$ restricts 
to the symmetrization of $y_1\ldots y_{k-1}(x_k+x_k')$.  \qed 
\end{lem}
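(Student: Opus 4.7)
The plan is to deduce the formulas directly from the definitions of $e_{2k-1}$ and $e_{2k-1}'$ in (\ref{e-classes}) together with Quillen's explicit formula for the restriction of $q_{2k-1}$ to the diagonal subgroup of $GL_n(\FF_5)$, transported via the ring homomorphisms $\pi$ and $\pi'$. There is nothing essentially new beyond the method used in the $n=2$ case leading to (\ref{restriction}); the point is to record that the formulas extend componentwise.

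First I would set up the analogue of the commutative diagram (\ref{commdiag}) for $n$ arbitrary, namely
\begin{equation*}
\begin{matrix}
D_n(\Z[\frac{1}{2},i]) & \to & GL_n(\Z[\frac{1}{2},i]) \\
\downarrow & & \downarrow \\
D_n(\FF_5) & \to & GL_n(\FF_5)
\end{matrix}
\end{equation*}
with vertical arrows induced by $\pi$ (and analogously for $\pi'$). By definition of $e_{2k-1}=\pi^*(q_{2k-1})$, the restriction to $H^*(D_n(\Z[\frac{1}{2},i]);\FF_2)$ coincides with $\pi^*$ applied to the restriction of $q_{2k-1}$ to $H^*(D_n(\FF_5);\FF_2)$. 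By Quillen \cite{Q2}, this latter restriction equals the symmetrization of $y_1\cdots y_{k-1}x_k$ in $\FF_2[y_1,\ldots,y_n]\otimes E(x_1,\ldots,x_n)$.

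The remaining step is to compute the effect of the induced map
$$
\pi^*:H^*(D_n(\FF_5);\FF_2)\to H^*(D_n(\Z[\tfrac{1}{2},i]);\FF_2)
$$
on generators. Since $\pi$ acts componentwise on the product $\prod_k \Z[\tfrac{1}{2},i]^{\times}\to \prod_k \FF_5^{\times}$, the formulas (\ref{pipi'}) extend verbatim, giving $\pi^*(y_k)=y_k$ and $\pi^*(x_k)=x_k$, and similarly $\pi'^*(y_k)=y_k$ and $\pi'^*(x_k)=x_k+x_k'$ for all $k=1,\ldots,n$. The values on $y_k$ follow from naturality of the Bockstein once one checks that on each factor $\pi$ restricted to $\Z/4\subset \Z[\tfrac{1}{2},i]^{\times}$ is an isomorphism onto $\FF_5^{\times}$ (since $\pi(i)=3$, a generator), while $1+i\mapsto 4$ is a square so contributes nothing to $H^1$; the analogous calculation for $\pi'$ uses $\pi'(i)=2$ and $\pi'(1+i)=3$, both non-squares in $\FF_5^{\times}$.

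Finally, because the symmetric group $\mathfrak{S}_n$ acts by permuting the factors and $\pi^*$, $\pi'^*$ are equivariant for this action, the symmetrization operation commutes with both pullbacks. Applying $\pi^*$ (resp.\ $\pi'^*$) to the symmetrization of $y_1\cdots y_{k-1}x_k$ therefore yields the symmetrization of $y_1\cdots y_{k-1}x_k$ (resp.\ of $y_1\cdots y_{k-1}(x_k+x_k')$), which is the claim. The only potential obstacle is bookkeeping the two independent pieces of $H^1$ arising from the two generators $i$ and $1+i$ of $\Z[\tfrac{1}{2},i]^{\times}$ modulo squares, but this is already settled by the computation underlying (\ref{pipi'}) and extends without change to $n>2$.
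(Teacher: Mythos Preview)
Your argument is correct. The paper does not actually prove this lemma: it is stated with a \qed and the remark that it ``generalizes (\ref{restriction}) and whose straightforward proof we leave to the reader.'' Your write-up carries out precisely that straightforward generalization---using the commutative square with $D_n(\FF_5)$, Quillen's formula for the restriction of $q_{2k-1}$, and the componentwise extension of (\ref{pipi'})---so it matches the intended approach.
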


The following result determines the image of the restriction homomorphism and  
shows that (b) implies (c) in Theorem \ref{QC}. It resembles results of Mitchell \cite{Mitchell} for 
$GL_n(\Z[{1\over 2}])$ for $p=2$ and of Anton \cite{An1} for $GL_n(\Z[{1\over 3},\zeta_3])$ for $p=3$.  
\smallskip 

\begin{prop}\label{res} Let $n\geq 1$ be an integer. The image of the restriction map 
$$
i^*:H^*(GL_n(\Z[{1\over 2},i]);\F_2)\to H^*(D_n(\Z[{1\over 2},i]);\F_2)\cong \F_2[y_1,\ldots y_n]
\otimes E(x_1,x_1'\ldots,x_n,x_n') 
$$ 
is isomorphic to  
$$
\F_2[c_1,\ldots c_n]\otimes E(e_1,e_1',\ldots,e_{2n-1},e_{2n-1}')\ . 
$$  
\end{prop}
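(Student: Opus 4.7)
The plan is to verify both inclusions in the asserted equality $\mathrm{Im}(i^*)=A$, where
$$
A:=\F_2[c_1,\ldots,c_n]\otimes E(e_1,e_1',\ldots,e_{2n-1},e_{2n-1}').
$$
The inclusion $A\subseteq\mathrm{Im}(i^*)$ is essentially formal once I check that the candidate generators span a free subalgebra of the stated shape, since all of $c_k$, $e_{2k-1}$, $e_{2k-1}'$ are pulled back from $GL_n(\Z[\frac{1}{2},i])$. For the reverse inclusion $\mathrm{Im}(i^*)\subseteq A$, I would sandwich $\mathrm{Im}(i^*)$ inside the ${\goth S}_n$-Weyl invariants of $H^*(D_n(\Z[\frac{1}{2},i]);\F_2)$ and identify those invariants with $A$; then the chain $\mathrm{Im}(i^*)\subseteq(\cdot)^{{\goth S}_n}=A\subseteq\mathrm{Im}(i^*)$ forces equality.

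For the first step, Lemma \ref{e-classes1} combined with the standard behavior of Chern classes on the maximal torus gives explicit formulas: $i^*(c_k)$ is the $k$-th elementary symmetric polynomial in $y_1,\ldots,y_n$, and $i^*(e_{2k-1})$, $i^*(e_{2k-1}')$ are the ${\goth S}_n$-symmetrizations of $y_1\cdots y_{k-1}x_k$ and of $y_1\cdots y_{k-1}(x_k+x_k')$ respectively. Algebraic independence of the polynomial images is classical; in characteristic $2$ all odd-degree generators automatically commute and square to zero, so to obtain the free polynomial-tensor-exterior structure it suffices to verify that the product of the $2n$ odd-degree generators is nonzero modulo the polynomial ideal. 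This is a direct top-monomial calculation that generalizes the pattern of (\ref{restriction}) from $n=2$ to arbitrary $n$.

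For the second step, $\mathrm{Im}(i^*)$ automatically lies in $H^*(D_n(\Z[\frac{1}{2},i]);\F_2)^{{\goth S}_n}$, since $D_n$ is normal in its normalizer in $GL_n(\Z[\frac{1}{2},i])$ with Weyl quotient ${\goth S}_n$ acting by simultaneous permutation of the triples $(y_j,x_j,x_j')$. What remains is the invariant-theoretic identification
$$
\bigl(\F_2[y_1,\ldots,y_n]\otimes E(x_1,x_1',\ldots,x_n,x_n')\bigr)^{{\goth S}_n}=A.
$$
The classical single-copy analogue $(\F_2[y_1,\ldots,y_n]\otimes E(z_1,\ldots,z_n))^{{\goth S}_n}$, known to be free over $\F_2[c_1,\ldots,c_n]$ on the symmetrizations of $y_1\cdots y_{k-1}z_k$, underlies Quillen's $GL_n(\F_q)$ calculation \cite{Q2}. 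In the doubled case I would proceed by the bigraded decomposition $E(x_j,x_j')\cong\bigoplus_{j,k} E^j(\F_2^n)\otimes E^k(\F_2^n)$ (both copies of $\F_2^n$ carrying the standard permutation ${\goth S}_n$-action) and compute ${\goth S}_n$-invariants of each bidegree summand tensored with $\F_2[y_1,\ldots,y_n]$ separately.

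The hard part will be the doubled-case invariant computation. The summands $E^j(\F_2^n)\otimes E^k(\F_2^n)$ are not themselves permutation ${\goth S}_n$-modules, so the single-copy result does not apply mechanically; one must carefully track the ${\goth S}_n$-isotypic structure of each bidegree and verify that no unexpected invariants appear outside those generated by monomials in $e_{2k-1}$ and $e_{2k-1}'$. A convenient consistency check — and likely the cleanest route to finishing the argument — is a Poincaré-series match between $A$ and the full invariant ring, in the spirit of the series identity (\ref{chi-identity}) used in the proof of Proposition \ref{lim-lim1}.
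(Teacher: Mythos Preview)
Your sandwich argument has a genuine gap: the identification
\[
\bigl(\F_2[y_1,\ldots,y_n]\otimes E(x_1,x_1',\ldots,x_n,x_n')\bigr)^{{\goth S}_n}=A
\]
is false, and already the single-copy ``classical analogue'' you invoke is false. Take $n=2$ and one exterior copy: the element $z_1z_2$ is ${\goth S}_2$-invariant (in characteristic~$2$ we have $z_2z_1=z_1z_2$), yet in degree~$2$ the subalgebra $\F_2[c_1,c_2]\otimes E(q_1,q_3)$ contains only $c_1$; one checks $q_1q_3=c_1z_1z_2$, so $z_1z_2$ itself is missing. In the doubled situation for $n=2$ the discrepancy is worse: the degree-$2$ part of the ${\goth S}_2$-invariants is $5$-dimensional (spanned by $y_1+y_2$, $x_1x_2$, $x_1'x_2'$, $x_1x_1'+x_2x_2'$, $x_1x_2'+x_2x_1'$), while $A$ is only $2$-dimensional there (spanned by $c_1$ and $e_1e_1'$). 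So the ``unexpected invariants'' you worry about really do appear, the Poincar\'e-series check you propose would immediately fail, and the chain $\mathrm{Im}(i^*)\subseteq(\cdot)^{{\goth S}_n}=A$ cannot close. Quillen's computation of $H^*(GL_n(\F_q);\F_p)$ determines the \emph{image} of restriction to the diagonal, not the full Weyl invariants; you have conflated the two.

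The paper's proof of the hard inclusion $B_n:=\mathrm{Im}(i^*)\subseteq A=:C_n$ is quite different. It proceeds by induction on $n$, with base case $n=2$ supplied by Weiss's calculation (\ref{n=2}). The inductive step uses \emph{two} block inclusions $GL_{n-1}\times GL_1\subset GL_n$ and $GL_{n-2}\times GL_2\subset GL_n$ simultaneously, giving $B_n\subset (C_{n-1}\otimes C_1)\cap(C_{n-2}\otimes C_2)$. One then introduces the lexicographic order on monomials $y^I$ and shows that the leading monomial of any element of $C_k\otimes C_{n-k}$ satisfies the staircase condition $a_j-a_{j+1}\ge\varepsilon_{1,j+1}+\varepsilon_{2,j+1}$ for $j\neq k$; combining $k=n-1$ and $k=n-2$ covers all $1\le j<n$, which is exactly the condition characterizing leading monomials of elements of $C_n$. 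Subtracting off the corresponding $C_n$-monomial and iterating finishes the argument.
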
 

Here we have identified the Chern classes $c_i$ and the classes $e_{2i-1}$ and $e_{2i-1}'$ 
with their image via $i^*$.  The images of the elements $c_i$ are, of course, the elementary symmetric 
polynomials in the $y_i$ and the images of the classes $e_{2i-1}$ and $e_{2i-1}'$ have been 
determined in Lemma \ref{e-classes1}. We remark that even though $i^*$ need not be injective, 
it is injective on the subalgebra of $H^*(GL_n(\Z[{1\over 2},i]);\F_2)$ 
generated by the classes $c_i$, $e_{2i-1}$ and $e_{2i-1}'$, $1\leq i\leq n$.   
\smallskip

This proposition is an analogue of Proposition 3.6 of \cite{An2}. Its proof uses crucially condition 
(\ref{image}) below, which also plays a central role in \cite{An2}.    

\begin{proof}  
In this proof we denote the subalgebra 
$$
\F_2[c_1,\ldots c_n]\otimes E(e_1,e_1',\ldots,e_{2n-1},e_{2n-1}')\ . 
$$  
of $H^*(D_n(\Z[{1\over 2},i]);\F_2)$ by $C_n$ and the image of the restriction map by $B_n$. 
We need to show that $B_n=C_n$. This is trivial if $n=1$ and for $n=2$ this follows from Theorem 1 of 
\cite{Weiss} (cf. (\ref{n=2}) and (\ref{rest}) and Lemma \ref{e-classes1}). 

The classes $c_1,\ldots,c_n$ are in $B_n$ as images of the Chern classes with the same name and 
the classes $e_1,\ldots e_{2n-1}$, $e_1',\ldots e_{2n-1}'$ are in $B_n$ by Lemma \ref{e-classes1}. 
Therefore we have $C_n\subset B_n$. We will show $B_n\subset C_n$ for $n\geq 2$ 
by induction on $n$. This will be done in three steps. 

1. From the inclusions 
$$
\begin{array}{rlc}
GL_{n-2}(\Z[{1\over 2},i])\times GL_2(\Z[{1\over 2},i])&\subset & GL_n(\Z[{1\over 2},i]) \\
\\
GL_{n-1}(\Z[{1\over 2},i])\times GL_1(\Z[{1\over 2},i])&\subset & GL_n(\Z[{1\over 2},i]) \\
\end{array}
$$ 
given by matrix block sum and the identifications of  
$D_{n-2}(\myLambda)\times D_2(\myLambda)$ with $D_n(\myLambda)$ 
and of $D_{n-1}(\myLambda)\times D_1(\myLambda)$ with $D_n(\myLambda)$
we see that 
$$
B_n\subset B_{n-1}\otimes B_1\cap B_{n-2}\otimes B_2
$$ 
and by induction hypothesis the latter subalgebra is equal to 
$$
C_{n-1}\otimes C_1\cap C_{n-2}\otimes C_2\ ,  
$$ 
in particular we have 
\begin{equation}\label{BnC}
B_n\subset C_{n-1}\otimes C_1\cap C_{n-2}\otimes C_2 \ . 
\end{equation}

2. The monomial basis in  
$$
H^*(D_n(\Z[{1\over 2},i]);\F_2)\cong \FF_2[y_1,\ldots,y_n]\otimes E(x_1,\ldots,x_n,x_1',\ldots,x_n')
$$ 
is in bijection with the set $S(n)$ of sequences 
$$
I=(a_1,\varepsilon_{1,1},\varepsilon_{2,1},\ldots,a_n,\varepsilon_{1,n},\varepsilon_{2,n}) 
$$
where the $a_i$ are integers $\geq 0$ and $\varepsilon_{i,j}\in \{0,1\}$ for $i=1,2$ and $1\leq j\leq n$. 
More precisely to $I$ we associate the monomial 
$$
y^I:=y_1^{a_1}\ldots y_n^{a_n}x_1^{\varepsilon_{1,1}}\ldots x_n^{\varepsilon_{1,n}}
{x_1'}^{\varepsilon_{2,1}}\ldots {x_n'}^{\varepsilon_{2,n}}\ . 
$$  
We equip $S(n)$ with the lexicographical order and denote it by $<_n$.  
This order has the property that for each $1\leq k<n$ 
it agrees with the lexicographical order on $S(k)\times S(n-k)$ if $S(k)$ and $S(n-k)$ are equipped with 
the orders $<_k$ and $<_{n-k}$ and $S(n)$ is identified with $S(k)\times S(n-k)$ via concetanation of 
sequences. 

In the sequel we replace the symmetrizations of the elements $y_1\ldots y_{i-1}(x_i+x_i')$, $i=1,\ldots,n$,   
by the symmetrization of $y_1\ldots y_{i-1}x_i'$ and by abuse of notation we continue to denote them by 
$e_{2i-1}'$. This does not change the subalgebra $C_n$. This subalgebra 
$$
\F_2[c_1,\ldots c_n]\otimes E(e_1,e_1',\ldots,e_{2n-1},e_{2n-1}')\subset 
\FF_2[y_1,\ldots,y_n]\otimes E(x_1,\ldots,x_n,x_1',\ldots,x_n')
$$  
has a monomial basis which is in bijection with the set $T(n)$ of sequences 
$$
K=(k_1,\ldots,k_n;\phi_{1,1},\ldots,\phi_{1,n};\phi_{2,1}\ldots,\phi_{2,n})
$$ 
where the $k_i$ are integers $\geq 0$ and $\phi_{i,j}\in \{0,1\}$ for $i=1,2$ and $1\leq j\leq n$. 
More precisely to $K$ we associate the monomial 
$$
c^K:=c_1^{k_1}\ldots c_n^{k_n}e_1^{\phi_{1,1}}\ldots e_n^{\phi_{1,n}}
e_1^{\phi_{2,1}}\ldots e_n^{\phi_{2,n}}\ . 
$$ 
We define a map 
$$
\alpha: T(n)\to S(n) 
$$
by associating to $K\in T(n)$ the largest monomial in $S(n)$ which occurs in the 
decomposition of $c^K$ as linear combination of elements $x^I$ with $I\in S(n)$. 
The proof of the following result is elementary and is left to the reader. 

\begin{lem} 
The map $\a$ is explicitly given by 
$$ 
\a((k_1,\ldots,k_n;\phi_{1,1},\ldots,\phi_{1,n};\phi_{2,1}\ldots,\phi_{2,n}))
=(a_1,\varepsilon_{1,1},\varepsilon_{2,1},\ldots,a_n,\varepsilon_{1,n},\varepsilon_{2,n})
$$ 
with 
\begin{align*}
a_1=&\ k_1+\ldots k_n+\sum_{i=1}^2(\phi_{i,2}+\ldots \phi_{i,n}) \\ 
a_2=&\ k_2+\ldots k_n+\sum_{i=1}^2(\phi_{i,3}+\ldots \phi_{i,n}) \\ 
\ldots &\ \ldots \\
a_j=&\ k_j+\ldots k_n+\sum_{i=1}^2(\phi_{i,j+1}+\ldots \phi_{i,n}) \\ 
\ldots &\ \ldots \\
a_n=&\ k_n \\
\varepsilon_{i,j}= &\ \phi_{i,j}, \ \ 1\leq j\leq n,\ i=1,2 \ .  \ \ \ \ \ \ \ \ \ \ \ \ \ \ \ \ \ \ \ \ \ \ \ \ \ \ \  \qed \\
\end{align*}  
\end{lem}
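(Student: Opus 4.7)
The plan is direct: I identify the lex-leading monomial of each generator of $C_n$, establish that the leading monomial of a product of generators equals the product of the leading monomials, and then read off the exponents of $\a(K)$ from this product.

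First I compute the leading terms under the order $<_n$, which is lex on the interleaved tuple $(a_1,\varepsilon_{1,1},\varepsilon_{2,1},a_2,\ldots)$. Since $c_k$ is the $k$-th elementary symmetric polynomial in $y_1,\ldots,y_n$, its lex-leading monomial is $y_1y_2\cdots y_k$. The class $e_{2k-1}$ is the symmetrization of $y_1\cdots y_{k-1}x_k$; among the monomials $y_{i_1}\cdots y_{i_{k-1}}x_{i_k}$ in its ${\goth S}_n$-orbit, the one maximizing the interleaved sequence places as many consecutive $y$-factors as possible at the smallest indices, which forces the choice $y_1\cdots y_{k-1}x_k$. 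The analogous argument gives leading monomial $y_1\cdots y_{k-1}x_k'$ for $e_{2k-1}'$.

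The key step is multiplicativity of leading monomials: the leading monomial of $c^K$ under $<_n$ is the product of the leading monomials of its factors. On the polynomial side this is the standard Gr\"obner-basis fact that lex is a monomial order, so $I'<_n I$ and $J'\leq_n J$ imply $I'+J'<_n I+J$. On the exterior side each exponent $\phi_{i,j}$ is $0$ or $1$ and the various generators contribute $x$- and $x'$-letters at pairwise distinct positions, so the exterior product of the leading factors is nonzero and no square-zero cancellation intervenes. This is the one point requiring a little care; everything else is bookkeeping.

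Finally, the leading monomial of $c^K$ is
\[
\prod_{\ell=1}^n (y_1\cdots y_\ell)^{k_\ell}\cdot \prod_{\ell=1}^n (y_1\cdots y_{\ell-1}x_\ell)^{\phi_{1,\ell}}(y_1\cdots y_{\ell-1}x_\ell')^{\phi_{2,\ell}}.
\]
Reading off the $y_j$-exponent: $y_j$ appears in the $c_\ell$-factor for $\ell\geq j$ and in the $e_{2\ell-1}$- and $e_{2\ell-1}'$-factors for $\ell\geq j+1$, giving the exponent $k_j+\cdots+k_n+\sum_{i=1}^2(\phi_{i,j+1}+\cdots+\phi_{i,n})=a_j$. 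The $x_j$- and $x_j'$-exponents are obviously $\phi_{1,j}$ and $\phi_{2,j}$. This is exactly the claimed description of $\a$.
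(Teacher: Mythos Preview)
Your argument is correct. The paper itself does not supply a proof of this lemma---it simply declares the result elementary and leaves it to the reader---and your identification of the lex-leading monomial of each generator, the multiplicativity of leading terms (with the exterior nonvanishing check), and the final exponent count is exactly the straightforward verification the author has in mind.
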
 
 
From this lemma it is obvious that $\a$ is injective and a sequence 
$$
I=(a_1,\varepsilon_{1,1},\varepsilon_{2,1},\ldots,a_n,\varepsilon_{1,n},\varepsilon_{2,n}) \in S(n)
$$  
is in the image of $\a$ if and only if  we have 
\begin{equation}\label{image}
a_j-a_{j+1}\geq \varepsilon_{1,j+1}+\varepsilon_{2,j+1}\ \ \text{for all}\ \ 1\leq j<n \ . 
\end{equation} 
In particular, if an element $x$ is in $C_n$ then the maximal sequence which appears 
in the decomposition of $x$ as a linear combination of the monomials $x^I$ 
with $I\in S(n)$ satisfies (\ref{image}) for all $1\leq j<n$. 
Likewise, if $x$ is in $C_{i}\otimes C_{n-i}$
then this maximal sequence is equal to the maximal sequence which appears 
in the decomposition of $x$ as a linear combination of the monomials $x^I$ 
with $I\in S(k)\times S(n-k)$ and hence it satisfies (\ref{image}) for all $1\leq j<i$ and $i+1\leq j<n$.  

3. Now let $x$ be a homogeneous element of $B_n$ and let $I_0$ be the maximal sequence in $S(n)$ 
appearing in the decomposition of $x$ as a linear combination of the monomials $x^I$ 
with $I\in S(n)$.  By (\ref{BnC}) we have $x\in C_{n-1}\otimes C_1$ and $x\in C_{n-2}\otimes C_2$,   
and $I_0$ remains the maximal sequence in $S(n-1)\times S(1)$ resp. $S(n-2)\times S(2)$ 
appearing in the decomposition of $x$ as a linear combination of the monomials $x^I$ 
with $I\in S(n-1)\times S(1)$ resp. $I\in S(n-2)\times S(2)$. 
Hence $I_0$ satisfies conditions (\ref{image}) for $1\leq j<n-1$ resp. $1\leq j<n-2$ and $j=n-1$. 
In particular condition condition (\ref{image}) holds for all $1\leq j<n$ and therefore there exists 
$K_0\in T(n)$ such that $\a(K_0)=I_0$. Then $x-c^{K_0}$ is still in $B_n$ 
and the maximal sequence appearing in the decomposition of $x-c^{K_0}$ 
is smaller than that of $x$. By iterating this procedure we see that $x$ belongs to $C_n$.  
\end{proof}

Finally we relate $C(3,\Z[\frac{1}{2},i],2)$ to the behaviour of the restriction homomorphism 
$$
H^*(\Gamma;\F_2)\to H^*(C_{\Gamma}(E_2);\F_2) \ . 
$$ 
For this we observe that the subgroups $\Gamma=SL_3(\Z[\frac{1}{2},i])$ and the center 
$Z\cong \Z[\frac{1}{2},i]^{\times}$ of $GL_3(\Z[\frac{1}{2},i])$ have trivial intersection 
and their product is the kernel of the homomorphism 
$$
GL_3(\Z[\frac{1}{2},i])\to (\Z[\frac{1}{2},i])^{\times}\to (\Z[\frac{1}{2},i])^{\times}/(\Z[\frac{1}{2},i])^{\times})^3\cong \Z/3 
$$ 
given as the composition of the determinant with the natural quotient map. 
Therefore the spectral sequence of the extension 
$$
1\to SL_3(\Z[\frac{1}{2},i])\times Z\to GL_3(\Z[\frac{1}{2},i])\to \Z/3\to 1
$$ 
gives an isomorphism 
\begin{equation}\label{index3}
H^*(GL_3(\Z[\frac{1}{2},i]);\FF_2)\cong 
\big(H^*(SL_3(\Z[\frac{1}{2},i]);\F_2)\otimes H^*(Z;\FF_2)\big)^{\Z/3} \ . 
\end{equation} 
\smallskip

\begin{prop}\label{C3+main} The conjecture $C(3,\Z[{1\over 2},i],2)$ holds if and and only if either   
\smallskip 

a) $H^*(SL_3(\Z[{1\over 2},i]);\F_2)\cong \F_2[b_2,b_3]\otimes E(d_3,d_3',d_5,d_5')$ or 
\smallskip 

b) the kernel of the map $\psi$ of Theorem \ref{mainthm} is a finite dimensional vector space 
for which the action of $\Z/3\cong (\Z[\frac{1}{2},i])^{\times}/(\Z[\frac{1}{2},i])^{\times})^3$ 
has trivial invariants. 
\end{prop}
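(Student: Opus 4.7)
My plan is to combine the decomposition (\ref{index3}) with exactness of the $\Z/3$-invariants functor on $\FF_2[\Z/3]$-modules (valid because $|\Z/3|$ is coprime to $2$), and to use Theorem \ref{QC} to reformulate $C(3,\Z[\frac{1}{2},i],2)$ as the condition $(\ker\psi)^{\Z/3}=0$. Since Theorem \ref{mainthm}(c) already ensures $\ker\psi$ is finite dimensional, this vanishing is precisely (b), and (a) is the strictly stronger condition $\ker\psi=0$, which trivially implies (b); this gives the claimed equivalence of all three statements.

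I would begin with two preparatory facts. Since $Z$ is central in $GL_3(\Z[\frac{1}{2},i])$, the $\Z/3$-action on $H^*(Z;\FF_2)$ is trivial, so (\ref{index3}) simplifies to
$$H^*(GL_3(\Z[\frac{1}{2},i]);\FF_2)\cong H^*(\Gamma;\FF_2)^{\Z/3}\otimes H^*(Z;\FF_2).$$
Moreover the non-trivial class of $\Z/3$ is represented by conjugation by the diagonal lift $g=\mathrm{diag}(1+i,1,1)$, whose determinant generates the cokernel of the cube map. Since $g$ fixes $SD_3(\Z[\frac{1}{2},i])$ pointwise, the restriction $H^*(\Gamma;\FF_2)\to H^*(SD_3(\Z[\frac{1}{2},i]);\FF_2)$ factors through the $\Z/3$-invariants, and the target $T:=\FF_2[b_2,b_3]\otimes E(d_3,d_3',d_5,d_5')$ of $\psi$ -- a subalgebra of $H^*(SD_3(\Z[\frac{1}{2},i]);\FF_2)$ by Theorem \ref{mainthm}(a) -- carries the trivial action, so $T^{\Z/3}=T$. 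Applying the exact functor $(-)^{\Z/3}$ to $0\to\ker\psi\to H^*(\Gamma;\FF_2)\to T\to 0$ yields
$$0\to (\ker\psi)^{\Z/3}\to H^*(\Gamma;\FF_2)^{\Z/3}\to T\to 0.$$

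For the core equivalence $C(3,\Z[\frac{1}{2},i],2)\Leftrightarrow (b)$, an elementary Poincar\'e series check shows that the Poincar\'e series of $\FF_2[c_1,c_2,c_3]\otimes E(e_1,e_1',\ldots,e_5,e_5')$ is the product of $\chi_0$ (the Poincar\'e series of $T$ computed in Proposition \ref{lim-lim1}) and the Poincar\'e series of $H^*(Z;\FF_2)\cong\FF_2[y]\otimes E(x,x')$. If $C(3,\Z[\frac{1}{2},i],2)$ holds, Theorem \ref{QC} and the simplification above force the Poincar\'e series of $H^*(\Gamma;\FF_2)^{\Z/3}$ to equal $\chi_0$, so the displayed sequence gives $(\ker\psi)^{\Z/3}=0$, which is (b). Conversely, assuming (b), the exact sequence identifies $H^*(\Gamma;\FF_2)^{\Z/3}$ with $T$ via $\psi$; lifting the generators $c_2,c_3,e_3,e_3',e_5,e_5'$ via the map $\varphi$ of Theorem \ref{mainthm}(b) and supplementing with classes $c_1,e_1,e_1'\in H^*(GL_3(\Z[\frac{1}{2},i]);\FF_2)$ -- whose restrictions to $H^*(Z;\FF_2)$ deliver the generators $y,x,x'$ by a direct computation analogous to Lemma \ref{e-classes1} applied to the scalar inclusion $Z\hookrightarrow GL_3$ -- one obtains an algebra homomorphism
$$\FF_2[c_1,c_2,c_3]\otimes E(e_1,e_1',\ldots,e_5,e_5')\to H^*(GL_3(\Z[\frac{1}{2},i]);\FF_2)$$
which the Poincar\'e series match promotes to an isomorphism.

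The step I expect to require the most care is this algebra-homomorphism construction in the $(b)\Rightarrow C(3,\Z[\frac{1}{2},i],2)$ direction: one must verify that the chosen lifts satisfy the tensor-product relations of a polynomial algebra with an exterior algebra, and that the resulting map is actually bijective rather than merely realizing a Poincar\'e series match. The key observation is that $c_1,e_1,e_1'$ restrict trivially to $\Gamma$ (since $c_1|_\Gamma=0$ and the Quillen class $q_1$ vanishes on $SL_3(\FF_5)$) while $c_2,c_3,e_3,e_3',e_5,e_5'$ descend to $T$-generators under restriction, and this clean separation into the two tensor factors of $T\otimes H^*(Z;\FF_2)$ should guarantee both the required relations and the bijectivity.
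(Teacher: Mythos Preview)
Your argument follows the same skeleton as the paper's (very terse) proof: both use that $\Z/3$ acts trivially on $H^*(Z;\FF_2)$, so that (\ref{index3}) simplifies to $H^*(GL_3)\cong H^*(\Gamma)^{\Z/3}\otimes H^*(Z)$, and that $\Z/3$ acts trivially on the target $T$ of $\psi$ (the paper phrases this as acting trivially on $\text{Im}(\varphi)$, which amounts to the same thing since $\psi\varphi$ is an isomorphism), together with exactness of $(-)^{\Z/3}$ on $\FF_2$-modules and Theorem~\ref{mainthm}. Your reading of the statement as $C(3,\Z[\frac{1}{2},i],2)\Leftrightarrow(b)$, with (a) a special case, is the intended one.

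Your direction $C\Rightarrow(b)$ via a Poincar\'e series count using Theorem~\ref{QC}(c) is fine. Your direction $(b)\Rightarrow C$ is correct in spirit but more laborious than necessary, and the step you flag---promoting a Poincar\'e series match to an isomorphism---is not complete as written: equal Poincar\'e series do not by themselves give bijectivity, and the ``clean separation'' sketch is not yet a proof. A quick fix is to observe that the composite of your algebra map with restriction to $H^*(D_3;\FF_2)$ is injective by the remark following Proposition~\ref{res}, which together with the dimension count forces an isomorphism. Cleaner still, bypass condition (c) of Theorem~\ref{QC} entirely and use condition (b): under your tensor decomposition the kernel of the restriction $H^*(GL_3;\FF_2)\to H^*(D_3;\FF_2)\cong H^*(SD_3;\FF_2)\otimes H^*(Z;\FF_2)$ is exactly $(\ker\psi)^{\Z/3}\otimes H^*(Z;\FF_2)$, so injectivity of restriction is equivalent to $(\ker\psi)^{\Z/3}=0$ and the equivalence follows in one line.
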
 

\begin{proof} The quotient $\Z/3\cong (\Z[\frac{1}{2},i])^{\times}/(\Z[\frac{1}{2},i])^{\times})^3$ 
acts clearly trivially on $H^*(Z;\FF_2)$ 
and on the image of the homomorphism $\varphi$ of Theorem \ref{mainthm}. 
Hence, the corollary follows immediately from (\ref{index3}) and Theorem \ref{mainthm}. 
\end{proof}

\footnotesize

\end{document}